\documentclass[a4paper,10pt,intlimits,ngerman]{amsart}
\usepackage[varg]{txfonts}
\usepackage[unicode]{hyperref}
\usepackage{graphicx}
\usepackage{color}
\usepackage{tikz-cd}
\usetikzlibrary{calc,patterns,angles,quotes}


\makeatletter
\makeatother

\newtheorem{theorem}{Theorem}[section]

\newtheorem{lemma}[theorem]{Lemma}



\hypersetup{
breaklinks=true,
colorlinks=true,
linkcolor=blue,
citecolor=blue,
urlcolor=blue,
}


\numberwithin{equation}{section}


\DeclareMathOperator{\spt}{spt}


\newcommand{\be}{\begin{equation}}
\newcommand{\ee}{\end{equation}}

\newcommand{\vecc}{\mbox{$ c $}}
\newcommand{\vece}{\mbox{ $ e $}}
\newcommand{\vecf}{\mbox{ $ f $}}
\newcommand{\veco}{\mbox{ $ o $}}
\newcommand{\vect}{\mbox{ $ t $}}
\newcommand{\vecu}{\mbox{ $ u $}}
\newcommand{\vecv}{\mbox{ $ v $}}
\newcommand{\vecw}{\mbox{ $ w $}}
\newcommand{\vecx}{\mbox{ $ x $}}

\renewcommand{\epsilon}{\varepsilon}

\newcommand{\essinf}{\mathrm{ess} \, \inf}


\title[Discretization of the M\"obius energy]{A M\"obius invariant discretization of O'Hara's M\"obius energy}

\subjclass[2010]{57M25, 49Q10, 53A04}

\author{
Simon Blatt}
\address[Simon Blatt]{Departement of Mathematics, Paris Lodron Universit\"at Salzburg, Hellbrunner Strasse 34, 5020 Salzburg, Austria}
\email[Simon Blatt]{simon.blatt@sbg.ac.at}

\author{Aya Ishizeki}
\address[Aya Ishizeki]{Department of Mathematics and Informatics, Faculty of Science, Chiba University
1-33 Yayoi-cho, Inage, Chiba, 263-8522, Japan}
\email[Aya Ishizeki]{ a.ishizeki@chiba-u.jp}

\author{Takeyuki Nagasawa}
\address[Takeyuki Nagasawa]{Departement of Mathematics, Graduate School of Science and Engineering, Saitama University, Saitama 338-8570, Japan}
\email[Takeyuki Nagasawa]{tnagasaw@rimath.saitama-u.ac.jp}

\date{\today}

\begin{document}


\begin{abstract}
We introduce a new discretization of O'Hara's M\"obius energy. In contrast to the known discretizations of Simon and Kim and Kusner it is invariant under M\"obius transformations of the surrounding space. The starting point for this new discretization is the cosine formula of Doyle and Schramm.
We then show $\Gamma$-convergence of our discretized energies to the M\"obius energy under very natural assumptions. 
\end{abstract}

\maketitle

\tableofcontents

\section{Introduction}

Motivated by the work of Fukuhara \cite{Fukuhara1988}, Jun O'Hara in a series of papers \cite{OHara1991,OHara1992,OHara1994} studied the energy 
\begin{align*}
 E(f) := \int_{\mathbb R / \mathbb Z} \int_{\mathbb R / \mathbb Z} \left( \frac 1{\|f(x) - f(y)\|^2} - \frac 1 {d_f(x,y)^2} \right) \|f'(x)\| \cdot \|f'(y)\| dx dy
\end{align*}
of a regular closed curve $f \in C^{0,1}(\mathbb R / \mathbb Z, \mathbb R^n)$.
In the formula above $d_f(x,y)$ denotes the length of the shortest arc of the curve $f$ connecting the two points $f(x)$ and $f(y)$.
 This has been the first and still is the most prominent and best known geometric knot energy. Due to its invariance under M\"obius transformations found by Freedman, He, and Wang \cite{Freedman1994}, it is now called \emph{M\"obius energy}.  
 
There have been two successful attempts to discretize this energy. Jonathan Simon \cite{Simon1994} defined the so-called \emph{minimal distance} energy for polygons $p$ by
\begin{equation*}
 U_{MD}(p) := \sum_{X,Y: \text{non-consecutive segments of } p} \frac{l(X)\cdot l(Y)}{MD(X,Y)^2}
\end{equation*}
where $l$ denotes the length of the segment and $MD(X,Y)$ is the minimal distance of points on $X$ and $Y$. Among many other things, Simon proved that this energy can be minimized within knot classes and showed together with Eric Rawdon some explicit error bounds of $C^{1,1}$- curves.

A simpler discretization was suggested by Kim and Kusner in \cite{Kim1993}, namely
\begin{equation*}
 E^n := \sum_{i,j=1}^n \left( \frac 1 {\|p(\theta_i) - (\theta_j)\|^2} - \frac 1 {d(\theta_i, \theta_j)^2}\right) d(\theta_{i+1}, \theta_i) d(\theta_{j+1}, \theta_j).
\end{equation*}
Here, $p(\theta_i)$ denote the vertices of the polygon $p$ and $d$ the distance of points along the polygon $p$.
Sebastian Scholtes could prove that this discretization $\Gamma$-converges to the M\"obius energy in \cite{Scholtes2014}. He furthermore showed that this energy is minimized by regular $n$-gons.

While this discretization does not immediately have self-repulsive effects, it has the big advantage that one can actually show that it is minimized by regular $n$-gons -- a question that is still open for Jonathan Simon's minimal distance energy.

In contrast to the M\"obius energy itself none of the discrete versions known up to now is invariant under M\"obius tranformations. Of course the absence of this invariance  has  some positive effects on the behavior of these energies. For example it makes  the proof that minimizers exist much easier than for the M\"obius energy itself. But on the other hand these discrete versions will not reflect some of the fundamental properties of the continuous M\"obius energy. Motivated by this observation, we will introduce and study a M\"obius invariant discrete version of the M\"obius energy, prove some basic properties like $\Gamma$-convergence to the M\"obius energy, and formulate some open problems.

Let us present the outline of this article: In Section \ref{sec:DiscreteEnergy} we recapitulate the cosine formula for the M\"obius energy due to Doyle and Schramm, discuss its M\"obius invariance, and describe how to use it to discretize the M\"obius energy in a M\"obius invariant way. Futhermore, we will see that this energy is minimized by polygons with vertices on a circle. Section \ref{sec:Approx} gathers some facts we will need later on to extend the $\limsup$ inequality to curves with merely bounded M\"obius energy. We recall and slightly extend for this purpose some approximation results from \cite{Blatt2016b}.
Section \ref{sec:Gamma} contains the statement and proof of our main result, the $\Gamma$-convergence for the discretized energies to the M\"obius energy. In the appendix we derive a formula for the discretized energies for future reference.

\section{A Discrete M\"obius Invariant Version of the M\"obius Energy} \label{sec:DiscreteEnergy}

\subsection{Cosine Formula of Doyle and Schramm}

As communicated to us by Kusner and Sullivan \cite{Kusner1997}, Doyle and Schramm found a new interpretation of O'Hara's M\"obius energy that highlights its M\"obius invariance.

They put together two M\"obius invariant pieces to build the M\"obius energy. The first piece is the quantity
\begin{equation*}
 \frac {\|f'(x)\| \cdot \|f'(y)\|}{\|f(x)-f(y)\|^2}
\end{equation*}
which is obviously invariant under scalings and translations. To derive the invariance under the inversion on the unit sphere 
$$
 I : \mathbb R^n \setminus \{0\} \rightarrow \mathbb R^n \setminus \{0\}, \quad x \mapsto \frac x {\|x\|^2}
$$
we calculate
\begin{equation} \label{eq:moebInv}
\begin{aligned}
 \|(I \circ f) (x) - (I \circ f(y)\|^2 & = \frac {\|f(y)\|^4\|f(x)\|^2 -2 \|f(x)\|^2 \|f(y)\|^2 \langle f(x), f(y) \rangle + \|f(y)\|^2 \|f(x)\|^4} {\|f(x)\|^4 \|f(y)\|^4}
 \\ 
 & = \frac{\|f(x) - f(y)\|^2}{\|f(x)\|^2 \|f(y)\|^2} 
\end{aligned}
\end{equation}
and
\begin{align*}
 \|(I \circ f)'(x)\| =  \frac{ \|f'(x)\|}{\|f(x)\|^2}.
\end{align*}
We see that 
\begin{equation*}
\frac {\|(I \circ f)'(x)\| \cdot \|(I\circ f)'(y)\|}{\|(I \circ f)(x)-(I \circ f)(y)\|^2} = \frac {\|f'(x)\| \cdot \|f'(y)\|}{\|f(x)-f(y)\|^2},
\end{equation*}
i.e. the term $\|f'(x)\| \cdot \|f'(y)\|/\|f(x)-f(y)\|^2$ is also invariant under inversions on spheres and hence under all M\"obius transformations of $\mathbb R^n \cup \{\infty\}.$

Let us describe the second building block of the cosine formula. Given two points $x, y \in \mathbb R / \mathbb Z$ one considers the circle $C_f(x,y)$ going through the points $f(x)$ and $f(y)$ and being tangent to $f$ at $f(x).$ Then the angle $\alpha_f(x,y)$ at which the two circles $C_f(x,y)$ and $C_f(y,x)$ meet is invariant under M\"obius transformations, as the circles are invariant under such transformations. 

By the observations above, the quantity
\begin{align*}
 E_{\cos}(f) := \int_{\mathbb R / \mathbb Z} \int_{\mathbb R / \mathbb Z} \frac {1-\cos(\alpha_f(x,y))}{\|f(x)-f(y)\|^2} \|f'(x)\| \|f'(y)\| dx dy
\end{align*}
is invariant under M\"obius transformations. Strikingly, we have (cf. \cite{Kusner1997})
\begin{align*}
 E(f)=E_{\cos} (f) +4.
\end{align*}

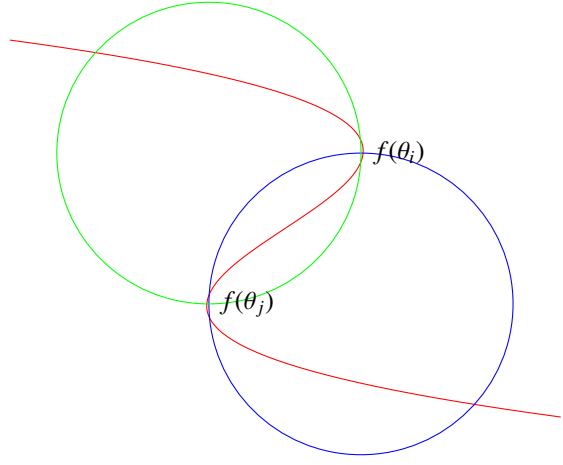
\begin{figure}
\begin{tikzpicture}[scale=1,rotate=90]
\draw [red, domain=-2.5:2.5, samples=100] plot (\x, {2*(sqrt(5)-2) * \x * \x * \x - 1.5 *  \x });
\draw [blue] (-1,-1) circle (2);
\draw [green] (1,1) circle (2);
\node at (1,-1.5) {$f(\theta_i)$};
\node at (-1,0.5) {$f(\theta_j)$};
\end{tikzpicture}
\caption{This picture illustrates the definition of the angle $\alpha_{ij}$. }
\end{figure}

\subsection{Discretizing the Cosine Formula}

Let us discretize both ingredients to the cosine formula in a  M\"obius invariant way. Let us consider a closed polygon with vertices $p(\theta_i),$ $i=1, \ldots m,$ and let us introduce the shortcuts
$$
 \Delta _i^j p:= p(\theta_j) - p(\theta_i) , \quad \Delta_i p = \Delta_i^{i+1}p= p(\theta_{i+1}) - p(\theta_{i}).
$$
Then a M\"obius invariant version of the first ingredient for $i\not=j$ is the so-called \emph{cross ratio}
\begin{align*}
 \frac {\|\Delta_i p\| \cdot \| \Delta_j p\|}{\|\Delta_i^j p\| \cdot \|\Delta_{i+1}^{j+1} p\|}
\end{align*}
which can easily be checked using equation \eqref{eq:moebInv}.

Let us now assume that the four points $p(\theta_i),p(\theta_j),p(\theta_{i+1}),$ and $p(\theta_{j+1})$ are pairwise different. To generalize the second ingredient to the cosine formula, we consider the circles 
$$
 C_{i,j}
$$
going through the points $p(\theta_i), p(\theta_{i+1})$ and $p(\theta_j)$. Let $\alpha_{ij}$ be the angle at which the circles $C_{i,j}$ and $C_{j,i}$ meet and $\tilde \alpha_{ij}$ be the angle at which the circles $C_{i,(j+1)}$ and $C_{j,(i+1)}$ meet. By our construction, these angles are invariant under M\"obius transformations. 

We set
$$
 E^{m}_{cos}(p) = \sum_{d_m(i,j)>1} \frac {\|\Delta_i p\|\cdot \|\Delta_j p\|}{\|\Delta_i^j p\|\|\Delta _{i+1}^{j+1} p\|} \left( 1 - \frac 1 2 \left(\cos (\alpha_{ij}) + \cos (\tilde \alpha_{ij}) \right) \right).
$$

One observes that the summands in the definition of our discretization $E^{m}_{cos}(p)$ are all nonnegative and that the $E^m_{cos}(p)$ is zero if and only if the angle between all the circles $C_{ij}$ and $C_{ji}$ is zero and hence if all the vertices of the polygon $p$ lie on one circle.

Note that there are obvious different choices for the integrals for which the technics developed within this article work as well.

\section{Approximation by Smooth Curves} \label{sec:Approx}

Our proof of the $\limsup$-inequality later on will rely in an essential way on the following extension of an approximation result in \cite{Blatt2016b}. Let $\eta \in C^\infty(\mathbb R, [0,\infty[)$ be such that $\spt \eta \subset [-1,1]$ and $\int_{\mathbb R} \eta dx =1$. Furthermore, let $\eta_\varepsilon (x) = \frac 1 \varepsilon \eta(\frac x \varepsilon)$ for all $x \in \mathbb R$, $\varepsilon >0$.
For an embedded curve $f \in W^{3/2 ,2 } (\mathbb R / \mathbb Z, \mathbb R^n) \cap W^{1,\infty}(\mathbb R / \mathbb Z, \mathbb R^n)$ that is regular in the sense that $\mathrm{ess}\inf |f'| >0$ and $\varepsilon >0$ we consider the standard smoothend curves
$$
 f_\varepsilon := f \ast \eta_\varepsilon.
$$
The following theorem holds.

\begin{theorem} \label{thm:Approximation}
 The mappings $f_\varepsilon$ are regular curves for $\varepsilon >0$ small enough converging to $f$ in $W^{3/2,2}$ for $\varepsilon \rightarrow 0$ and
 $$
  E(f_{\varepsilon}) \xrightarrow{\varepsilon \downarrow 0} E(f).
 $$
\end{theorem}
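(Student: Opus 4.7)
The plan is to verify three assertions: regularity (and embeddedness) of the mollified curves $f_\varepsilon$ for $\varepsilon$ small, the convergence $f_\varepsilon\to f$ in $W^{3/2,2}$ (which is standard mollifier theory on the torus and requires no further comment), and the main point, the convergence $E(f_\varepsilon)\to E(f)$.

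For regularity and embeddedness I would argue as follows. Because $f\in W^{3/2,2}$ the derivative $f'$ lies in $W^{1/2,2}(\mathbb R/\mathbb Z,\mathbb R^n)$, which embeds into $\mathrm{VMO}$. Together with $\essinf\|f'\|>0$ this yields a uniform pointwise lower bound $\|f'_\varepsilon(x)\|\ge c>0$ for $\varepsilon$ small, so each $f_\varepsilon$ is regular. Since an embedded regular Lipschitz closed curve is bi-Lipschitz as a map from $\mathbb R/\mathbb Z$, a putative self-intersection $f_\varepsilon(x_\varepsilon)=f_\varepsilon(y_\varepsilon)$ with $d(x_\varepsilon,y_\varepsilon)$ bounded below would, after passing to a subsequence, contradict the bi-Lipschitz estimate for $f$ via the uniform convergence $f_\varepsilon\to f$; the complementary case $d(x_\varepsilon,y_\varepsilon)\to 0$ is ruled out by the uniform lower bound on $\|f'_\varepsilon\|$.

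For the energy convergence I would split the double integral into the near-diagonal region $\{d(x,y)\le\delta\}$ and its complement. Away from the diagonal, the uniform bi-Lipschitz estimates for $f$ and $f_\varepsilon$ give a uniform bound on the integrand, and Lebesgue's dominated convergence theorem, combined with the almost everywhere convergence $f_\varepsilon\to f$ and $f'_\varepsilon\to f'$, yields convergence on this region. Near the diagonal the standard pointwise identity
$$
d_f(x,y)^2-\|f(x)-f(y)\|^2=\tfrac{1}{2}\int_x^y\int_x^y\|f'(s)-f'(t)\|^2\,ds\,dt
$$
(valid after arclength reparameterization, which does not change the energy) combined with the bi-Lipschitz bound shows that the near-diagonal contribution is controlled by a constant times the truncated Gagliardo seminorm
$$
\int \int_{d(x,y)\le C\delta}\frac{\|f'(x)-f'(y)\|^2}{|x-y|^2}\,dx\,dy.
$$
Since convolution with a nonnegative kernel of unit mass does not increase the $W^{1/2,2}$-seminorm and $f'_\varepsilon\to f'$ in $W^{1/2,2}$, this truncated seminorm tends to zero as $\delta\to 0$ uniformly in $\varepsilon$. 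Sending first $\varepsilon\to 0$ at fixed $\delta$ and then $\delta\to 0$ gives $E(f_\varepsilon)\to E(f)$.

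The chief technical obstacle is precisely this uniform-in-$\varepsilon$ control of the near-diagonal piece: one must rule out that mollification concentrates the $W^{1/2,2}$-energy of $f'$ at small scales. This is ensured by the contraction property of mollification in $W^{1/2,2}$ together with the convergence of the full seminorm. A further, more minor point is the compatibility between the arclength reparameterization used to derive the identity above and the parameterization of $f_\varepsilon$; this is handled uniformly by the bi-Lipschitz bounds established in the first step, exactly as in the reference from \cite{Blatt2016b} that is being extended here.
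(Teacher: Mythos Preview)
Your argument is correct and rests on the same core mechanism as the paper's proof: regularity and a uniform bi-Lipschitz estimate for the mollified curves, together with the pointwise domination of the energy integrand by the Gagliardo-type quantity $\|f'_\varepsilon(s)-f'_\varepsilon(t)\|^2/|s-t|^2$, whose behavior under mollification is controlled via Jensen's inequality. The organizational differences are minor. First, where you split into near- and far-diagonal regions and use the contraction of the truncated $W^{1/2,2}$-seminorm under mollification to get uniform smallness near the diagonal, the paper instead shows that the dominating integrands $\tilde I_\varepsilon$ converge to $\tilde I$ in $L^1$ (via Cauchy--Schwarz and the $W^{3/2,2}$-convergence $f_\varepsilon\to f$), obtains uniform integrability of $I_\varepsilon$, and concludes with Vitali's theorem; the two routes are equivalent, and your truncated-seminorm contraction is essentially the paper's inequality $D_{f_\varepsilon}(r)\le D_f(r)$. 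Second, your detour through arclength is unnecessary: the paper works directly in the given parametrization, using the identity $d_{f_\varepsilon}(x,y)^2-\|f_\varepsilon(x)-f_\varepsilon(y)\|^2=\int\!\!\int(\|f'_\varepsilon(s)\|\|f'_\varepsilon(t)\|-f'_\varepsilon(s)\cdot f'_\varepsilon(t))\,ds\,dt$ and the elementary bound $\big\|\tfrac{v}{\|v\|}-\tfrac{w}{\|w\|}\big\|\le C\|v-w\|$ for $\|v\|,\|w\|\ge c>0$, which removes the compatibility issue you flag. Finally, your embeddedness argument by contradiction yields injectivity of each $f_\varepsilon$, but for the energy argument you actually invoke a \emph{uniform} bi-Lipschitz bound; the paper proves this uniformity directly in Lemma~\ref{lem:bilipschitz} via the same $D_f(r)\to 0$ mechanism, and your compactness argument would need a small upgrade to yield it.
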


For the convenience of the reader and since it is of great importance in the following, we give a complete proof of this result.

We start by proving the following uniform bi-Lipschitz bound.

\begin{lemma} \label{lem:bilipschitz}
 In the situation above, there is a constant $\varepsilon_0 >0$ and a constant $C < \infty$ such that
 $$
  \sup_{x,y \in \mathbb R, |x-y|\leq \frac 1 2 , 0 \leq \varepsilon <\varepsilon_0} 
  \frac{\|f_\varepsilon(x) - f_\varepsilon(y)\|}{ |x-y| } >0
 $$
 and
 $$
  \sup_{x,y \in \mathbb R / \mathbb Z, 0 \leq \varepsilon <\varepsilon_0} 
  \frac{\|f_\varepsilon(x) - f_\varepsilon(y)\|}{ d_{f_\varepsilon}(x-y)| } >0.
 $$
\end{lemma}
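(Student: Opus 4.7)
The plan is to deduce the uniform bi-Lipschitz property of $f_\varepsilon$ from the corresponding property of $f$ itself, by splitting the argument into a large-scale and a small-scale regime. As a preliminary observation, since $f$ is an embedded closed curve in $W^{1,\infty}$ with $\essinf |f'| > 0$, a standard compactness argument gives $c_0 > 0$ with $\|f(x) - f(y)\| \geq c_0 |x-y|$ for all $x, y \in \mathbb R$ with $|x-y| \leq 1/2$. Once the first inequality of the lemma is established uniformly in $\varepsilon$, the second will follow from the trivial upper bound $d_{f_\varepsilon}(x,y) \leq \|f_\varepsilon'\|_{L^\infty} \cdot |x-y| \leq \|f'\|_{L^\infty} \cdot |x-y|$ (for the distance-minimising representative of $y$), so the task reduces to the first inequality.

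For the large-scale regime $|x-y| \in [\delta, 1/2]$, with $\delta > 0$ to be chosen below, the continuous function $(x,y) \mapsto \|f(x) - f(y)\|$ attains a strictly positive minimum $m_\delta$ on the corresponding compact subset of $\mathbb R / \mathbb Z \times \mathbb R / \mathbb Z$. Since $f$ is continuous on $\mathbb R / \mathbb Z$, the mollifications $f_\varepsilon = f * \eta_\varepsilon$ converge to $f$ uniformly on $\mathbb R$, so for $\varepsilon$ small enough we obtain $\|f_\varepsilon(x) - f_\varepsilon(y)\| \geq m_\delta / 2 \geq m_\delta |x-y|$ throughout this regime.

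The crux of the argument is the small-scale regime $|x-y| \leq \delta$, where one writes
\[
  f_\varepsilon(x) - f_\varepsilon(y) = (x-y) \int_{\mathbb R} \int_0^1 f'(y + s(x-y) - z) \, ds \, \eta_\varepsilon(z) \, dz
\]
and aims to bound the double integral from below by a positive constant independent of $\varepsilon$ and of $(x,y)$. Decomposing $f' = |f'| \tau$ with unit tangent $\tau = f'/|f'|$ and using $\essinf |f'| > 0$, this reduces to showing that $\tau$, averaged over an interval of length $\delta + 2\varepsilon$, stays close to a single unit vector. The required oscillation bound is supplied by $f \in W^{3/2,2}$, via the finite Gagliardo seminorm
\[
  \int_{|h| < 1} \|f'(\cdot + h) - f'(\cdot)\|_{L^2(\mathbb R / \mathbb Z)}^2 \, \frac{dh}{|h|^2} < \infty,
\]
which controls the oscillation of $f'$ at all scales in an $L^2$-averaged sense. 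The main obstacle is converting this $L^2$-modulus of continuity into the desired pointwise cone property for the integrand, uniformly in $y$, $\varepsilon$ and $|x-y|$; I would handle it by a covering argument combined with an approximation of $f'$ by a coarser mollification $f' * \eta_\sigma$ at a scale $\sigma \gg \varepsilon$ that enjoys genuine uniform continuity, using the Gagliardo estimate to dominate the error. The remaining ingredients — uniform convergence $f_\varepsilon \to f$, compactness at large scales, and the passage to the second inequality — are then standard.
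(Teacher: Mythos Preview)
Your overall plan coincides with the paper's: split into a large-scale regime $|x-y|\in[\delta,\tfrac12]$ (handled by uniform convergence $f_\varepsilon\to f$ plus compactness) and a small-scale regime $|x-y|\le\delta$ (handled via the $W^{3/2,2}$ oscillation control of $f'$), then deduce the second inequality from the first using $d_{f_\varepsilon}(x,y)\le\|f'\|_{L^\infty}|x-y|$. The difference is in how the small-scale step is carried out.

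The paper does it directly through the localized Gagliardo quantity
\[
D_f(r):=\sup_z\Bigl(\int_{B_r(z)}\int_{B_r(z)}\frac{\|f'(x)-f'(y)\|^2}{|x-y|^2}\,dx\,dy\Bigr)^{1/2},
\]
which tends to $0$ as $r\to 0$ (it is dominated by the Gagliardo integral over the strip $|x-y|<2r$). A Jensen argument gives $D_{f_\varepsilon}(r)\le D_f(r)$, and Cauchy--Schwarz gives the uniform mean-oscillation bound $\tfrac{1}{|I|}\int_I\|f_\varepsilon'-\overline{f_\varepsilon'}_I\|\le D_{f_\varepsilon}(|I|/2)$. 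This immediately yields $\|\overline{f_\varepsilon'}_I\|\ge m-C\,D_f(|I|/2)$ for every short interval $I$ and every $\varepsilon$, which is precisely the lower bound on your double integral.

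Your proposed route through a coarser mollification $f'*\eta_\sigma$ has a gap as written. In the splitting $\overline{f_\varepsilon'}_I=\overline{(f'*\eta_\sigma)_\varepsilon}_I+\overline{(f'-f'*\eta_\sigma)_\varepsilon}_I$ the first term is indeed handled by the uniform continuity of $f'*\eta_\sigma$ (once one knows $|f'*\eta_\sigma|\ge c>0$, which itself already needs an argument of the paper's type). But the error term is not small in the required uniform sense: $f'-f'*\eta_\sigma$ need not be small in $L^\infty$ since $W^{1/2,2}$ in one dimension fails to embed into $C^0$, and its averages over intervals $I$ with $|I|+\varepsilon\ll\sigma$ are not controlled by its $L^2$ norm alone (the averaging kernel has $L^2$ norm $\sim|I|^{-1/2}$). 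To close this you would need exactly a uniform-in-position mean-oscillation estimate for $f'$ at small scales --- i.e.\ $D_f(r)\to 0$. So the detour through $\eta_\sigma$ does not avoid the paper's key estimate; the direct argument via $D_f(r)$ is both shorter and what actually does the work.
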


Of course Lemma~\ref{lem:bilipschitz} implies that the curves $f_\varepsilon$ are embedded regular curves for $\varepsilon \in [0, \varepsilon_0)$ with
$$
 \essinf \| f_\varepsilon'  \|  \geq C^{-1}.
$$

\begin{proof}
Let $m := \essinf \| f'  \| >0$, $M := \sup_{x \in \mathbb R} \eta(x) + 1 >1$, and 
$$
 D_f(r):=\sup_{z \in \mathbb R / \mathbb Z} \left( \int_{B_{r}(z)} \int_{B_r(z)} \frac {\|f'(x) - f'(y)\|^2} {|x-y|^2} dx dy \right)^{\frac 1 2}.
$$
Note that $D_f(r) \rightarrow 0 $ as $r \rightarrow 0.$
We first observe that $\|f'_{\varepsilon}\|_{L^\infty} \leq \|f'\|_{L^\infty}$ and using $f_\varepsilon '= f' \ast \eta_\varepsilon$,
Jensen's, and Fubini's theorem
\begin{align*}
 \int_{B_{r}(z)} \int_{B_r(z)} \frac {\|f_{\varepsilon}'(x) - f_{\varepsilon}'(y)\|^2} {|x-y|^2} dx dy & =
  \int_{B_{r}(z)} \int_{B_r(z)} \frac {\|\int_{\mathbb R}(f'(x-\xi) - f'(y-\xi)) \eta_\varepsilon(\xi) d\xi \|^2} {|x-y|^2} dx dy \\
  &\leq  \int_{B_{r}(z)} \int_{B_r(z)} \int_{B_\varepsilon(0)} \frac {\|f'(x-\xi) - f'(y-\xi)\|^2 } {|x-y|^2} \eta_\varepsilon(\xi) d\xi  dx dy \\
  &=  \int_{B_\varepsilon(0)} \int_{B_{r}(z-\xi)} \int_{B_r(z-\xi)} \frac {\|f'(x) - f'(y)\|^2 } {|x-y|^2} \eta_\varepsilon(\xi)   dx dy d\xi 
\end{align*}
and hence
\begin{equation} \label{eq:DBounded}
 D_{f_{\varepsilon}} (r) \leq D_{f}(r).
\end{equation}
We get for $0< \varepsilon \leq \frac 1 2 $ using again Jensen's inequality
\begin{align*}
 \frac 1 {2\varepsilon} \int_{B_\varepsilon(z)} \|f'(x) - f_\varepsilon'(z) \| dx
 &\leq
 \frac 1 {2\varepsilon} \int_{B_\varepsilon(z)} \int_{B_\varepsilon(0)}  \|f'(x) - f'(y)\| \eta_\varepsilon(z-y) dx dy 
 \\& \leq  
 \left( \frac 1 {(2\varepsilon)} \int_{B_\varepsilon(z)} \int_{B_\varepsilon(z)}  \|f'(x) - f'(y)\|^2 \eta_\varepsilon(z-y) dx dy \right)^{\frac 1 2}
 \\ & \leq \sqrt{2M}
  \left(  \int_{B_\varepsilon(z)} \int_{B_\varepsilon(z)}  \frac{\|f'(x) - f'(y)\|^2}{|x-y|^2} dx dy \right)^{\frac 1 2} \\ & \leq \sqrt{2M} D_f(\varepsilon).
\end{align*}
This implies
 \begin{align*}
 \|f'_{\varepsilon} (z)\| \geq  \frac 1 { 2 \varepsilon }  \int_{B_\varepsilon(z)} \|f'(x)\| dx
 -\frac 1 { 2 \varepsilon }  \int_{B_\varepsilon(z)}
	 \|f'(x) - f'_\varepsilon(z)\| dx
 \geq m - \sqrt {2M} D_f(\varepsilon)
\end{align*}
which is positive, if $\varepsilon>0$ is small enough, as $D_f(\varepsilon)$ converges to $0$ as $\varepsilon$ goes to $0$. Hence, $f_\varepsilon$
 is a regular curve for all $\varepsilon >0$ small enough 

We can now prove the bi-Lipschitz estimate by a similar argument. 
We know that 
\begin{equation*}
 D_{f_\varepsilon} (r) \rightarrow 0
\end{equation*}
as $r \rightarrow 0$ uniformly in $\varepsilon.$
We have for all $0 < r, \varepsilon \leq \frac 1 4$ 
\begin{align*}
  \frac 1 {2r} \int_{B_r(z)} \|f'(x) - \overline {(f_\varepsilon')}_{B_{r}(z)} \| dx
 &\leq
 \frac 1 {(2r)^2} \int_{B_r(z)} \int_{B_r(z)}  \|f_\varepsilon'(x) - f_\varepsilon'(y)\|  dx dy 
 \\& \leq  
 \left( \frac 1 {(2r)^2} \int_{B_r(z)} \int_{B_r(z)}  \|f_\varepsilon'(x) - f_\varepsilon'(y)\|^2  dx dy \right)^{\frac 1 2}
 \\ & \leq 
  \left(  \int_{B_r(z)} \int_{B_r(z)}  \frac{\|f_\varepsilon'(x) - f_\varepsilon'(y)\|^2}{|x-y|^2} dx dy \right)^{\frac 1 2} \\ & \leq D_{f_{\varepsilon}} (r).
\end{align*}
Hence, we get for all $x,y \in \mathbb R$ with $|x-y| \leq 2\varepsilon_0$ setting $z = \frac {x+y} 2$, $r =\frac {|x-y|} 2$
\begin{align*}
 \frac {\|f_\varepsilon(x) - f_\varepsilon(y)\|} {|x-y|} &=  \frac 1 {2r}\left\|\int_{B_r(z)} f'_\varepsilon (\tau) d\tau \right\|
 =  \left\| \overline{(f_\varepsilon')}_{B_r(z)} \right\| 
 \\ & = \frac 1 {2r} \int_{B_r(z)} \| f_\varepsilon'(\tau) \|d\tau - \frac 1 {2r} \int_{B_r(z)} \left| \|f_\varepsilon'(\tau) \| - \| \overline{(f_\varepsilon')}_{B_r(z)} \| \right| d\tau 
 \\ &\geq \frac 1 {2r} \int_{B_r(z)} \| f_\varepsilon'(\tau) \| d\tau - \frac 1 {2r} \int_{B_r(z)} \|f_\varepsilon'(\tau) -  \overline{(f_\varepsilon')}_{B_r(z)}\| d\tau  
 \\ &\geq m - \sqrt{2M} D_{f}(r) -  D_{f_{\varepsilon}}(r)
 \\ &\geq m - (\sqrt{2M}  +1) D_{f}(r) ,
\end{align*}
where we have used \eqref{eq:DBounded} in the last step. Again, if $r>0$ is small enough the righthand side is positive.

As the continuity together with the injectivity of $f $ implies that $\sup_{ \varepsilon_0 \geq |x-y| \leq \frac 1 2} \frac {\|f(x) - f(y)\|}{|x-y|} >0$
the first inequality is proven.

The second inequality follows from the first one using the bound $0 <\frac m4 \leq \|f'_\varepsilon\| \leq \|f'\|_{L^\infty} < \infty.$
\end{proof}

\begin{proof}[Proof of Theorem~\ref{thm:Approximation}]
 Due to Lemma~\ref{lem:bilipschitz}, we only need to show the convergence of the energies.  Let us pick a sequence $\varepsilon_j \downarrow 0$. As $f_{\varepsilon_j} \rightarrow f$ in $W^{\frac 3 2,2}$, we know that
 $\|f'_{\varepsilon_j}\| \rightarrow \|f'\|$ in measure. Hence, the integrand
 $$
  I_{\varepsilon_j} (x,y) = \left(\frac 1 {\|f_{\varepsilon_j} (x) - f_{\varepsilon_j}(y)\|^2} - \frac 1 {d_{f_{\varepsilon_j}}(x,y)^2} \right) \|f_{\varepsilon_j}'(x)\| \|f_{\varepsilon_j}'(y)\|
 $$
 converges to 
 $$
  I (x,y) = \left(\frac 1 {\|f (x) - f(y)\|^2} - \frac 1 {d_{f}(x,y)^2} \right) \|f'(x)\| \|f'(y)\|
 $$
 in measure. We will now complete the proof by showing that the integrands $I_{\varepsilon_j} (x,y)$ are uniformly integrable, as then an application of Vitali's theorem yields the convergence  of the energies.
 
 For this purpose, we estimate  setting $ w = x - y $,
 \begin{align*}
   I_\varepsilon (x,y)& = \left(\frac 1 {\|f_\varepsilon (x) - f_\varepsilon(y)\|^2} - \frac 1 {d_{f_\varepsilon}(x,y)^2} \right) \|f_\varepsilon'(x)\| \|f_\varepsilon'(y)\|
   \\
   &\leq C  \frac { d_{f_\varepsilon}(x,y)^2 - \|f_\varepsilon(x) - f_\varepsilon(y)\|^2}{|x-y|^4} \\
   & \leq C \frac { \int_{0}^1 \int_0^1 (\|f_\varepsilon'(x+\sigma_1w)\| \|f_\varepsilon'(x+\sigma_2 w)\| - f_\varepsilon'(x+\sigma_1 w) f_\varepsilon'(x+\sigma_2w)) d \sigma_1 d \sigma_2 }{|x-y|^2} \\
   & = \frac C 2  \frac {
   \int_{0}^1 \int_0^1 \left(\|f_\varepsilon'(x+\sigma_1w)\| \|f_\varepsilon'(x+\sigma_2 w)\| \left\| \frac { f_\varepsilon'(x+\sigma_1 w) } {\|f_\varepsilon'(x+\sigma_1 w)\|} - 
   \frac {f_\varepsilon'(x+\sigma_2w)} {\| f_\varepsilon'(x+\sigma_2 w)} \right\|^2 \right)d \sigma_1 d \sigma_2 }{|x-y|^2} \\
   & \leq C \frac { \int_{0}^1 \int_0^1 \left\|  f_\varepsilon'(x+\sigma_1 w) -f_\varepsilon'(x+\sigma_2w) \right\|^2 d \sigma_1 d \sigma_2 }{|x-y|^2}  =: C \tilde I_\varepsilon(x,y).
 \end{align*}
We get by first applying a binomial formula and then Cauchy-Schwartz
\begin{align*}
 &\int_{\mathbb R / \mathbb Z} \int_{\mathbb R / \mathbb Z} \left| \frac { \int_{0}^1 \int_0^1 \|  f_\varepsilon'(x+\sigma_1 w -f_\varepsilon'(x+\sigma_2w) \|^2 - \|f'(x+\sigma_1 w) -f'(x+\sigma_2w)\|^2d \sigma_1 d \sigma_2  }{|x-y|^2} \right| dx dy
 \\ 
 &= \int_{\mathbb R / \mathbb Z} \int_{\mathbb R / \mathbb Z} \Bigg| \frac { \int_{0}^1 \int_0^1 (  f_\varepsilon'(x+\sigma_1 w) -f_\varepsilon'(x+\sigma_2w)) + (f'(x+\sigma_1 w -f'(x+\sigma_2w))}{|x-y|} \\ &
 \frac{ ( f_\varepsilon'(x+\sigma_1 w) -f_\varepsilon'(x+\sigma_2w)) - (f'(x+\sigma_1 w) -f'(x+\sigma_2w))d \sigma_1 d \sigma_2 }{|x-y|} \Bigg| dx dy
 \\
 & \leq \left(\int_{\mathbb R / \mathbb Z}  \int_{\mathbb R / \mathbb Z} \frac { \int_{0}^1 \int_0^1 \|(  f_\varepsilon'(x+\sigma_1 w) -f_\varepsilon'(x+\sigma_2w)) + (f'(x+\sigma_1 w) -f'(x+\sigma_2w))\|^2 d \sigma_1 d \sigma_2 }{|x-y|^2} dx dy \right)^{\frac 1 2} 
 \\ & \left( \int_{\mathbb R / \mathbb Z} \int_{\mathbb R / \mathbb Z} \frac { \int_{0}^1 \int_0^1\| (  f_\varepsilon'(x+\sigma_1 w) -f_\varepsilon'(x+\sigma_2w)) - (f'(x+\sigma_1 w) -f'(x+\sigma_2w))\|^2 d \sigma_1 d \sigma_2 }{|x-y|^2} dx dy\right)^{\frac 1 2} 
 \\ & \leq C (\|f_\varepsilon\|_{W^{3/2,2}} + \|f\|_{W^{3/2,2}})
 \|f- f_\varepsilon\|_{W^{\frac 32,2}} \rightarrow 0.
\end{align*}
So for a sequence $\varepsilon_j \downarrow 0$ the integrands $\tilde I_{\varepsilon_j}$ are uniformly integrable as they converge to $$\tilde I (x,y) := \frac { \int_{0}^1 \int_0^1 \left\|  f'(x+\sigma_1 w) -f'(x+\sigma_2w) \right\|^2 d \sigma_1 d \sigma_2 }{|x-y|^2} $$ in $L^1$. As $I_\varepsilon \leq C \tilde I_\varepsilon$, also the integrands $I_{\varepsilon_j}$ are uniformly integrable.
\end{proof}

\section{$\Gamma$-Convergence}  \label{sec:Gamma}

To formulate our $\Gamma$-convergence result, we consider polygons as piecewise linear maps. We call a map $p:\mathbb R / \mathbb Z \rightarrow \mathbb R^n$  a \emph{closed polygon} with the $m$ vertices $p(\theta_i) \in \mathbb R^n,$ $i=1,\ldots, m$ if there are points $\theta_i \in [0,1)$, $\theta_1<\theta_2< \cdots < \theta_m$ such that $p$ is linear between two neighboring points $\theta_i$ and $\theta_{i+1}$, $i=1, \ldots, m$, or $\theta_m$ and $\theta_1$. We denote by
$$
 \delta(p) = \max\{|\theta_{i+1}-\theta_i|: i =1, \ldots,m \}
$$
the \emph{fineness} of the discretization, where $\theta_{m+1} = 1 + \theta_1.$
Let $P_m \supset W^{1,q}$, $\forall q \in [1, \infty]$ denote the space of all closed polygons.

Unfortunately there is one real obstacle to $\Gamma$-convergence, which is the fact that we have to explicitly assume that the fineness of the polygons is going to $0$ in order to obtain the $\liminf$-inequality. Which however is a very natural assumption to make.

\begin{theorem} Let $q \in [1,\infty).$ Then the discretized M\"obius energy $E_{cos}^{m}$ conditionally $\Gamma$-converges to the energy $E_{\cos}= E-4$ defined on the space of all regular curves in $C^{0,1}(\mathbb R / \mathbb Z, \mathbb R^n)$ with respect to the $W^{1,q}$-topology
in the following sense:
\begin{enumerate}
 \item ($\liminf$-inequality) For a sequence $p_m \in P_m$, $m \in \mathbb N$, converging to $f \in W^{1,q}$ in $W^{1,q}$ we have
 $$
  E_{\cos}(f) \leq \liminf_{m \rightarrow \infty} E_{\cos}^m(p_m)
 $$
 if the fineness of the discretization $\delta (f_m)$ converges to $0$ as $m$ goes to $\infty.$
 \item ($\limsup$-inequality) For all regular curves $f \in C^{0,1}(\mathbb R / \mathbb Z, \mathbb R^n)$ there is a sequence $p_m \in P_m$ such that
 $$
  E_{\cos}(f) \geq \limsup_{m \rightarrow \infty} E_{\cos}^m(p_m).
 $$
\end{enumerate}
\end{theorem}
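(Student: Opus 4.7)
My plan is to rewrite each discrete energy as an integral over $(\mathbb R/\mathbb Z)^2$ of a step function. For a polygon $p_m$ with vertex parameters $\theta_1^m < \dots < \theta_m^m$, define $\Phi_m : (\mathbb R/\mathbb Z)^2 \to [0,\infty)$ to be constant on each rectangle $R_{ij}^m := [\theta_i^m,\theta_{i+1}^m) \times [\theta_j^m, \theta_{j+1}^m)$ with value
\begin{equation*}
\Phi_m\big|_{R_{ij}^m} = \frac{\|\Delta_i p_m\| \cdot \|\Delta_j p_m\|}{\|\Delta_i^j p_m\| \cdot \|\Delta_{i+1}^{j+1} p_m\| \cdot (\theta_{i+1}^m - \theta_i^m)(\theta_{j+1}^m - \theta_j^m)} \left(1 - \tfrac{1}{2}(\cos \alpha_{ij}^m + \cos \tilde\alpha_{ij}^m)\right)
\end{equation*}
when $d_m(i,j) > 1$ and $0$ otherwise. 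Then $E_{\cos}^m(p_m) = \iint \Phi_m \, dx\, dy$, while $E_{\cos}(f) = \iint \Phi \, dx\, dy$ for the continuous integrand $\Phi(x,y) = (1-\cos\alpha_f(x,y)) \|f'(x)\|\|f'(y)\|/\|f(x)-f(y)\|^2$. The whole proof reduces to comparing these two integrals.

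For the $\liminf$-inequality the plan is to assume without loss of generality that the $\liminf$ is finite, pass to a suitable subsequence, and argue that $\Phi_m \to \Phi$ almost everywhere on $(\mathbb R/\mathbb Z)^2 \setminus \{x=y\}$. The assumption $\delta(p_m) \to 0$ combined with $p_m \to f$ in $W^{1,q}$ (and hence uniformly by Morrey/Sobolev embedding for a subsequence, after noting the $C^{0,1}$ bound) gives at every Lebesgue point $(x,y)$ of $f'(x)f'(y)$ with $x \neq y$: $\|\Delta_{i(x)} p_m\|/(\theta_{i+1}^m-\theta_i^m) \to \|f'(x)\|$, $\|\Delta_{i(x)}^{j(y)} p_m\| \to \|f(x)-f(y)\|$, and the three defining points of each circle $C_{i,j}^m$ converge to those defining $C_f(x,y)$, so by elementary plane geometry $\alpha_{ij}^m, \tilde\alpha_{ij}^m \to \alpha_f(x,y)$. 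Fatou's lemma then delivers $\iint \Phi \leq \liminf \iint \Phi_m$.

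For the $\limsup$-inequality I would first reduce to smooth $f$: if $E_{\cos}(f) = \infty$ there is nothing to prove, and if $E_{\cos}(f) < \infty$ then $f \in W^{3/2,2}$ (by the standard characterization of finite M\"obius energy), so Theorem \ref{thm:Approximation} produces smooth $f_\varepsilon \to f$ with $E_{\cos}(f_\varepsilon) \to E_{\cos}(f)$. It therefore suffices to construct, for each smooth regular $f$, a sequence $p_m$ with $E_{\cos}^m(p_m) \to E_{\cos}(f)$, and then pass to a diagonal sequence. For smooth $f$ I would take $p_m$ to be the inscribed polygon with uniform nodes $\theta_i = i/m$. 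The pointwise convergence $\Phi_m \to \Phi$ obtained above still holds, and $C^2$-smoothness together with the bi-Lipschitz estimate of Lemma \ref{lem:bilipschitz} (applied to $p_m$, which is easy since $p_m$ inherits regularity from $f$) gives a uniform bound $\Phi_m(x,y) \leq C$ by exploiting that $1 - \tfrac{1}{2}(\cos\alpha_{ij}^m + \cos\tilde\alpha_{ij}^m) \lesssim \|\kappa_f\|_\infty^2 |x-y|^2$ cancels the $|x-y|^{-2}$ singularity of the cross-ratio factor. Dominated convergence then yields $\iint \Phi_m \to \iint \Phi$.

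The principal obstacle is the detailed analysis of the discrete angle factor near the diagonal: one must show that the four-point configuration $(p_m(\theta_i), p_m(\theta_{i+1}), p_m(\theta_j), p_m(\theta_{j+1}))$ produces discrete angles that are a genuine second-order approximation of the continuous $\alpha_f$, with the correct cancellation against the divergence of the cross-ratio as $|x-y| \to 0$. This requires a careful plane-geometric estimate of $1 - \cos\alpha_{ij}^m$ in terms of the deviation of the four points from concyclic, uniform in $m$, and is where the M\"obius-invariant nature of the construction is crucial, since non-invariant bookkeeping would not give a uniformly integrable majorant along the diagonal.
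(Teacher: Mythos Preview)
Your proposal is correct and follows essentially the same route as the paper: rewrite $E^m_{\cos}(p_m)$ as an integral of a step function, obtain pointwise a.e.\ convergence of the integrands, apply Fatou for the $\liminf$, and for the $\limsup$ reduce via Theorem~\ref{thm:Approximation} plus a diagonal argument to smooth (indeed $C^{1,1}$) curves, inscribe equidistant polygons, and use dominated convergence once the angle factor is shown to be $O(|x-y|^2)$.

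The one point where the paper's argument differs from your hint is the proof of the key estimate $1-\tfrac12(\cos\alpha_{ij}+\cos\tilde\alpha_{ij})\leq C\,d_m(i,j)^2 m^{-2}$. You suggest that M\"obius invariance is the crucial ingredient here; in fact the paper does not use it at all for this bound. Instead it invokes the characterization of $C^{1,1}$ curves by bounded global (Menger) curvature \cite{Gonzalez2002,Cantarella2002}, which gives a uniform lower bound $r_{ij}\geq r_0>0$ on all circumradii. Then the purely elementary observation that the angle between two circles through two common points $f(\theta_i),f(\theta_j)$ is at most the average of their opening angles $\beta_{ij},\beta_{ji}$ over the chord, together with $\sin(\beta_{ij}/2)=\|f(\theta_i)-f(\theta_j)\|/(2r_{ij})$, yields $1-\cos\alpha_{ij}\leq \|f(\theta_i)-f(\theta_j)\|^2/(4r_0^2)$. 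So the majorant comes from a circumradius bound rather than from cross-ratio or conformal bookkeeping; your formulation in terms of $\|\kappa_f\|_\infty$ is morally the same, but the actual mechanism is simpler than ``deviation from concyclic'' and does not require any invariance.
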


Let us prove the $\liminf$- and $\limsup$-inequalities stated above in the following subsections.

\subsection{Proof of the \protect{$\liminf$}-Inequality} Let $p_m  \in P_m$, $m \in N$, be a sequence of polygons converging to $f$ in $W^{1,q}$. Then $p_m$ convergese uniformly to $f$ and after going to a subsequence, we can assume that $p_m'$ converges pointwise almost everywhere to $f$.

We rewrite the discrete energy as an integral leaving the superscript $m$ in the intermediate steps 
\begin{align*}
 E^{m}_{cos}(p_m) &=  \sum_{d_m(i,j) > 1} \frac {\|\Delta_i p_m\|\cdot \|\Delta_j p_m\|}{\|\Delta_i^j p_m\| \cdot \|\Delta_{i+1}^{j+1} p_m\|} \left( 1 - \frac 1 2 \left(\cos (\alpha^m_{ij}) + \cos (\tilde \alpha^m_{ij}) \right) \right)
 = \int_{\mathbb R / \mathbb Z} I^m_{p_m}(x,y) dx dy
\end{align*}
where 
\begin{align*}
 I_{p_m}^m (x,y) & = \sum_{d_m(i,j)>1} \frac {\left( 1 - \frac 1 2 \left(\cos (\alpha^m_{ij}) + \cos (\tilde \alpha^m_{ij}) \right) \right)} {\|\Delta_i^j p_m\| \cdot \|\Delta_{i+1}^{j+1} p_m\|}
 \frac{\|\Delta_i p_m\|}{|\theta_{i+1} - \theta_i|} \frac{\|\Delta_j p_m\|}{|\theta_{j+1}-\theta_j|} \chi_{[\theta_i,\theta_{i+1}]} \chi_{[\theta_j,\theta_{j+1}]} \\
 & = \sum_{d_m(i,j)>1} \frac { 1 - \frac 1 2 \left(\cos (\alpha^m_{ij}) + \cos (\tilde \alpha^m_{ij}) \right)} {\|\Delta_i^j p_m\| \cdot \|\Delta_{i+1}^{j+1} p_m\|}
 \|(p_m)'(x)\| \cdot \|(p_m)'(y)\| \chi_{[\theta_i,\theta_{i+1}]} \chi_{[\theta_j,\theta_{j+1}]}.
\end{align*}
As $I_{p_m}^m (x,y)$ converges pointwise almost everywhere to $\frac {1-\cos \alpha_f(x,y)} {\|f(x) - f(y)\|^2} \|f'(x)\| \|f'(y)\|$ if the fineness goes to $0$ we get from Fatou's lemma
\begin{equation*}
 \liminf_{m \rightarrow \infty} E^{m}_{\cos}(p_m) \geq E_{\cos}(f). 
\end{equation*}

\subsection{Proof of the \protect{$\limsup$}-Inequality}

\subsubsection{The  \protect{$\limsup$}-inequality for \protect{$C^{1,1}$} curves}
To prove the $\limsup$-inequality, we start with a regular curve $f \in W^{\frac 32,2}(\mathbb R / \mathbb Z, \mathbb R^n) \cap W^{1, \infty}(\mathbb R / \mathbb Z, \mathbb R^n)$. 

Let us first assume  that $f \in C^{1,1}$. We consider the polygons $p_m$ with edges $p_m(\theta_k)=f(\theta_k)$ for $k=1, \ldots, m$, where $\theta_k = \frac k m$. Let us remind that we assume that $p_m$ is the linear interpolation between these vertices.

The result is based on the following lemma.
\begin{lemma} \label{lem:uniformbound}
There is constant $C<\infty$ depending only of $f$ such that 
\begin{equation*}
   1 - \frac 1 2 \left(\cos (\alpha_{ij}) + \cos (\tilde \alpha_{ij}) \right)  \leq C \left( d_m(i,j)^2 m^{-2} \right)
\end{equation*}
where $d_m(i,j) = \min\{|i-j|,|i+m-j| \}$.
\end{lemma}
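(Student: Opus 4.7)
I would prove the stronger pointwise bounds $1-\cos\alpha_{ij}\leq C d_m(i,j)^2/m^2$ and the analogous one for $\tilde\alpha_{ij}$, from which the claim follows by averaging. The central tool is the Möbius invariance of the two angles. Both circles $C_{i,j}$ and $C_{j,i}$ share the two points $A:=p(\theta_i)$ and $C:=p(\theta_j)$, so the Möbius transformation $\phi$ sending $A\mapsto\infty$ and $C\mapsto 0$ (inversion at $A$ composed with a translation) turns both circles into straight lines through the origin. Writing $B:=p(\theta_{i+1})$ and $D:=p(\theta_{j+1})$, the angle $\alpha_{ij}$ then equals the line angle between
\[
 W_1=\frac{B-A}{|B-A|^2}-\frac{C-A}{|C-A|^2},\qquad W_2=\frac{D-A}{|D-A|^2}-\frac{C-A}{|C-A|^2}.
\]

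Introducing the shorthands $u:=C-A$, $\delta_i:=B-A$, $\delta_j:=D-C$ and Taylor expanding $(u+\delta_j)/|u+\delta_j|^2$ around $u/|u|^2$ gives $W_2=|u|^{-2}R_u(\delta_j)+O(|\delta_j|^2/|u|^3)$, where $R_u(v):=v-2\langle u,v\rangle u/|u|^2$ is the isometric reflection in the hyperplane orthogonal to $u$. For $W_1$, factoring out the dominant term $\delta_i/|\delta_i|^2$ yields, after normalisation, $\widehat{W_1}=\widehat\delta_i-(|\delta_i|/|u|)\widehat u+O((|\delta_i|/|u|)^2)$. I would then plug in the $C^{1,1}$-estimates $\delta_i=f'(\theta_i)/m+O(1/m^2)$ and $\delta_j=f'(\theta_j)/m+O(1/m^2)$, the Lipschitz bound $|f'(\theta_j)-f'(\theta_i)|\leq L d_m(i,j)/m$, and the bi-Lipschitz lower bound $|u|\geq c\, d_m(i,j)/m$ (which follows as in Lemma~\ref{lem:bilipschitz} from the $C^{1,1}$-regularity). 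These give $|\widehat u-\widehat\delta_i|$, $|\widehat u-\widehat\delta_j|$, and $|\widehat\delta_i-\widehat\delta_j|$ all bounded by $C d_m(i,j)/m$, from which a short computation produces $\widehat{W_1}+\widehat{W_2}=O(d_m(i,j)/m)$, with the sign fixed so that in the straight-line limit the two lines coincide. Since $1-\cos\alpha_{ij}=\tfrac12 |\widehat{W_1}+\widehat{W_2}|^2$ (with this sign convention), the bound follows. The argument for $\tilde\alpha_{ij}$ is identical upon replacing $B$ and $D$ with the vertices adjacent on the other side.

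The main obstacle I anticipate is that the relative error in the Taylor expansions of $W_1,W_2$ is of order $|\delta|/|u|\sim 1/d_m(i,j)$, which is not small when $d_m(i,j)$ is of order one. One needs to check that the components of these errors perpendicular to the leading directions---which are all that contribute to the angle---are of size $O(1/m)$ in every regime, using that $\widehat u,\widehat\delta_i,\widehat\delta_j$ nearly coincide. Should the direct expansion become unwieldy, a clean fallback is the inscribed angle theorem: the tangent to the circle through three vertices at one of them makes an angle with a chord equal to the opposite inscribed angle in the triangle, and for a polygon inscribed in a $C^{1,1}$-curve these triangular angles are estimated directly from the Lipschitz constant of $f'$ and the bi-Lipschitz constants above, yielding the same quantitative bound.
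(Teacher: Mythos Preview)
Your primary route via M\"obius inversion and Taylor expansion is genuinely different from the paper's, and the obstacle you flag is real: when $d_m(i,j)$ is of bounded order the ratio $|\delta|/|u|$ is comparable to $1$, so the remainder terms in your expansions of $W_1$ and $W_2$ are the same size as the leading terms, and the claim that the transverse components nevertheless cancel to order $O(1/m)$ is asserted but not verified. This can in fact be rescued (the four vertices lie within $O(m^{-2})$ of a line, and tracking this through the inversion gives the required $O(1/m)$ angle), but you have not carried it out, so as written the primary argument is incomplete precisely in the regime you identify.

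The paper instead takes essentially your fallback, but with one organizing ingredient you do not name: for an embedded $C^{1,1}$ curve the \emph{global curvature} is finite, i.e.\ there is a uniform lower bound $r_0>0$ on the circumradius of any three points on the curve (Gonzalez--Maddocks--Schuricht--von der Mosel, Cantarella--Kusner--Sullivan). From there the proof is short and uniform in $d_m(i,j)$: the tangent--chord angle of $C_{i,j}$ with the segment $f(\theta_i)f(\theta_j)$ equals half the central angle $\beta_{ij}$, the triangle inequality for angles on $\mathbb S^2$ gives $\alpha_{ij}\le\tfrac12(\beta_{ij}+\beta_{ji})\le\max(\beta_{ij},\beta_{ji})$, and $\|f(\theta_i)-f(\theta_j)\|=2r_{ij}\sin(\beta_{ij}/2)$ together with $r_{ij}\ge r_0$ yields
\[
1-\cos\alpha_{ij}\;\le\;2\sin^2\!\bigl(\tfrac{\beta_{ij}}{2}\bigr)\;\le\;\frac{\|f(\theta_i)-f(\theta_j)\|^2}{2r_0^{2}}\;\le\;C\,\frac{d_m(i,j)^2}{m^2}.
\]
No expansions, no case distinction. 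Your inversion argument, once completed, would reprove this with considerably more labour; the global-curvature route is both shorter and more robust.
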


Before proving the lemma, let us show how it can be used to prove the $\limsup$-inequality. Lemma~\ref{lem:uniformbound} together with the fact that $f$ is bi-Lipschitz tells us that the integrands are uniformly bounded. Furthermore, they converge pointwise to 
$$
\frac {1-\cos \alpha_f(x,y)} {\|f(x) - f(y)\|^2} \|f'(x)\| \|f'(y)\|.
$$
By Lebesgue's theorem of dominated convergence, we hence get 
$$
 \lim_{m \rightarrow \infty} E^{m}_{\cos}(p_m) = E_{\cos}(f).
$$

We will base the proof of Lemma \ref{lem:uniformbound} on the fact that the global curvature of a regular injective curves is bounded if and only if the curve is of class $C^{1,1}$. This fact was independently found and proven in \cite{Gonzalez2002} and \cite{Cantarella2002}.

\begin{proof}[Proof of \protect{Lemma~\ref{lem:uniformbound}}]

Using the symmetry of the problem, it is enough to show that 
 \begin{equation*}
  1- \cos \alpha_{ij} = |i-j|^2 O (m^{-2}).
 \end{equation*}
We start with showing a lower bound for the radii of the circles $C_{i,j}$.

For three points $x,y,z \in \mathbb R / \mathbb Z$ we denote by
$$
 r_{f}(x,y,z)
$$
the radius of the circumcircle of the three points $f(x), f(y), f(z)$ if the three points are not co-linear. Otherwise, we set $r(x,y,z)=\infty$.
The curvature of this circumcircle
$$
 \kappa_{f}(x,y,z) := \frac 1 {r_{f}(x,y,z)}
$$
is also known as \emph{Menger curvature}. It was proven in in \cite{Gonzalez2002} and \cite{Cantarella2002} that the \emph{global curvature}
$$
 \kappa_{f}:=\sup_{x,y,z} \kappa_f (x,y,z)
$$
is bounded if and only if $f$ is a $C^{1,1}$ curve. So in our case
\begin{equation} \label{eq:LowerBoundRadius}
 \inf_{x,y,z} r_f(x,y,z) >0.
\end{equation}

%

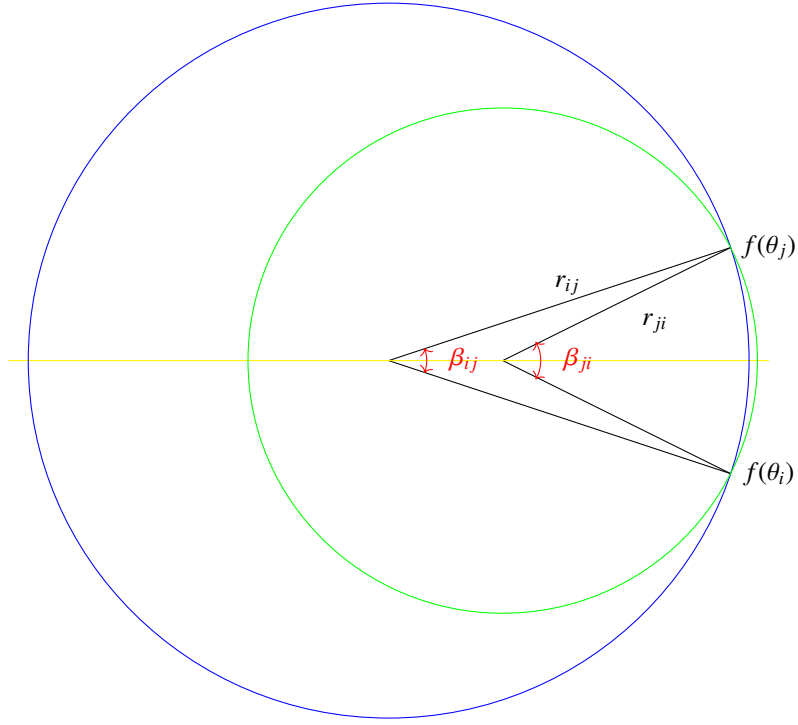
\begin{figure} \label{fig:estAngle}
\begin{tikzpicture}[scale=.5,rotate=90]
\coordinate (C) at (0,0) ;
    \coordinate (A) at (0,4) ;
    \coordinate (B) at (0,-4) ;
\draw (3,6) -- (0,-3);
\draw (3,6) -- (6,-3);
\draw (3,3) -- (0,-3);
\draw (3,3) -- (6,-3);
\draw [yellow] (3,16) -- (3,-4);

\node at (5,1.3) {$r_{ij}$};
\node at (4,-1) {$r_{ji}$};
\draw [blue] (3,6) circle (9.48);
\draw [green] (3,3) circle (6.70);
\draw[red,->] (3,2) arc (-90:-60:1);
\draw[red,->] (3,2) arc (-90:-120:1);

\draw[red,->] (3,5) arc (-90:-70.5:1);
\draw[red,->] (3,5) arc (-90:-109.47:1);

\node[red] at (3,4) {$\beta_{ij}$};
\node[red] at (3,1) {$\beta_{ji}$};

\node at (0,-4) {$f(\theta_i)$};
\node at (6,-4) {$f(\theta_j)$};

\end{tikzpicture}
\caption{This picture illustrates the estimate of the two angles $\beta_{i,j}$ and $\beta_{j,i}$. Note, that the two indicated circles might not lie in one plane angle. Neverthelsess. the anlge between them and the line connecting the two points  $f(\theta_i)$ and $f({\theta_j})$ is given by $\frac {\beta_{ij}} 2$ and $\frac {\beta_{ji}} 2$ respectively. We hence get the estimate $\alpha_{ij} \leq \frac {\beta_{ij} + \beta_{ji}} 2$ from the sub-additivity of the angles - which is nothing but the triangle inequality for the geodesic distance on $\mathbb S^2.$ }
\end{figure}

Now let us consider the two circles $C_{i,j}$ and $C_{j,i}$ and denote their radii by $r_{ij}$ and $r_{ji}$. Then these circles have the two points $f(\theta_i)$ and 
$f (\theta_j)$ in common. Let $0 \leq \beta_{i,j} \leq \pi$ denote the opening angle of the slice of the disk bounded by $C_{i,j}$ between the points $f(\theta_i)$ and $f(\theta_j)$. 
Let us assume w.l.o.g that $0 < \beta_{ji} \leq \beta_{ij} < \pi$. We have to keep in mind however that these circles might not belong to a single plane.

Note that nevertheless due to the fact that the angle between the line connecting $f(\theta_i)$ and $f(\theta_j)$ and the circles $C_{ij}$ and $C_{ji}$ is given by $\frac{\beta_{ij}}2$ and $\frac {\beta_{ji}}2$ resp., we get
$$
 \alpha_{ij} \leq \frac {\beta_{ij} + \beta_{ji}} 2 \leq \beta_{ij}.
$$
Since $\sin$ is monotonically increasing on $[0,  \pi / 2]$ this leads to
\begin{equation} \label{eq:estAngle}
 \sin(\alpha_{ij} /2)^2 \leq  \sin^2( \beta_{ij} / 2).
\end{equation}
We furthermore observe that   
\begin{equation*}
 \|f(\theta_i) - f(\theta_j)\| = 2 r_{ij}\sin\left(\frac {\beta_{ij}} 2\right)
\end{equation*}
and hence
\begin{equation*}
  \sin^2\left(\frac {\beta_{ij}}{2}\right) =\frac{ \|f(\theta_i) - f(\theta_j)\|^2 } {4 r_{i,j}^2}. 
\end{equation*}
Together with the estimate \eqref{eq:estAngle} this yields
\begin{equation*}
1- \cos(\alpha_{ij}) =  \sin^2\left(\frac {\alpha_{ij}}{2}\right) \leq \frac{ \|f(\theta_i) - f(\theta_j)\|^2 } {4 r_{i,j}^2} .
\end{equation*}
With the estimate \eqref{eq:LowerBoundRadius} and the fact that $\|f(\theta_i) - f(\theta_j)\|^2= d_m(i,j)^2 O(m^{-2})$  as $m\rightarrow \infty$ this proves the claim.
\end{proof}

\subsubsection{The \protect{$\limsup$}-inequality for \protect{$W^{\frac 32,2}$} curves } This follows literally in the same way as in \cite{Blatt2016b}, proof 2 of Theorem 4.8 .
Let us repeat the argument here for the convenience of the readers.

If  $f$ is a regular curves with bounded M\"obius energy, we can consider the smoothened curves $f_\varepsilon = f \ast \eta_\varepsilon$ and consider the curves 
By Theorem~\ref{thm:Approximation} we have
$\lim_{m \rightarrow \infty} E_{\cos}( f_{\frac 1 m}) = E_{\cos}(f).$

By the $\limsup$-inequality for $C^{1,1}$ curves, we can find in ${f}_{\frac 1m}$ inscribed equilateral $k$-gons $p_{m,k}$ with $\limsup_{k \rightarrow \infty} E_{\cos}^{k}(p_{m,k}) \leq E_{\cos}( f_{\frac 1m})$.
We observe that for all $k, \tilde m$ and $m'\geq \tilde m$ we have
$$ 
 \inf_{ m \geq \tilde m} E_{\cos}^k(p_{m,k}) \leq E_{\cos}^k(p_{m',k}).
$$
Taking the limes superior with respect to $k$ of this inequality we get
$$
 \limsup_{k\rightarrow \infty} \left(\inf_{m \geq \tilde m} E_{\cos}^k(p_{m,k}) \right) \leq \limsup_{k\rightarrow \infty} E_{\cos}^k(p_{m',k}) \leq E_{\cos}( f_{ \frac 1 {m'}})
$$
for all $m' \geq \tilde m$.
Hence,
$$
\limsup_{k\rightarrow \infty} \left(\inf_{m \geq \tilde m} E_{\cos}^k(p_{m,k}) \right) \leq \inf_{ m \geq \tilde m } 
E_{cos}( f_{\frac 1 m})\leq E_{\cos}(f)
$$
for all $\tilde m \in \mathbb N$.

If now for every $\tilde m, k\in \mathbb N$ we pick $m_{\tilde m, k}\in \mathbb N$ such that  $E_{\cos}^k(p_{m_{\tilde m , k}k})  \leq \inf_{m \geq \tilde m} E_{\cos}^k(p_{m,k})+ \frac 1k$ we get
$$
\limsup_{k\rightarrow \infty} E_{\cos}^{k}(p_{m_{\tilde m, k },k}) \leq \limsup_{k\rightarrow \infty} \left( \inf_{m \geq \tilde m} E_{\cos}^k(p_{m,k}) + \frac 1 k \right) =\limsup_{k\rightarrow \infty} \left( \inf_{m \geq \tilde m} E_{\cos}^k(p_{ m,k})  \right) \leq  E_{\cos}(f).
$$

Now we are ready to inductively define our sequence of polygons. We let $p_k$ be equal to $p_{m_{1,k},k}$ until $E_{\cos}^k(p_{m_{2,k}}) \leq E_{\cos}(f)+1$ for all bigger $k$. Then, we let $p_k$ be $p_{m_{2,k},k}$ until $E_{\cos}^k(p_{m_{3,k}}) \leq E_{\cos}(f) + \frac 1 2$ for all bigger $k$ and so on.  

This leads to a sequence $p_k$ of $k$-gons inscribed in the curves $ f_{\frac 1 {m_k}}$ such that both
$$
 \limsup_{m\rightarrow \infty} E_{\cos}^k(p_k) \leq E_{\cos}(f)
$$
and $m_k \rightarrow \infty$.

We finally have to prove that the polygons $p_k = p_{m_k, k}$ converge to $f$ in $W^ { 1,q}$ for $k\rightarrow \infty$ for all $q \in [1, \infty)$. By construction, we know that the $p_{m_k,k}$ are uniformly bounded in $W^{1,\infty}$. Let $p_{m_k,k}(x_1^{k})= f_{m_k}  (x_1^{k}),\ldots ,  p_{m_k,k} (x^{k}_k) = f_{m_k} (x_k^{k})$ be the vertices of the $k$-gon $p_{m_k,k}$. Due to the uniform bi-Lipschitz estimate we have $ \frac 1 {Ck} \leq |x_{i+1}^k - x_i^{k}| \leq \frac C k$ for a constant $C < \infty$ and hence we get for $x \in [x_i, x_{i+1}]$ using Taylor's approximation of first order in $x_i^k$ 
$$
 \| p_{k}(x) -   f_{\frac 1 {m_k }} (x)\| \leq C |x-x^k_i| \leq \frac C k.
$$
So, the $p_{k}$ converge uniformly to $f$.

To get the convergence of the derivatives, we use $p'_k = \overline{({f}'_{\frac 1 {m_k}})}_{[x^k_i,x^k_{i+1}]}$ for $x \in [x_i^k,x_{i+1}^k]$ to  estimate
\begin{align*}
 \int_{\mathbb R / \mathbb Z} \| p'_{k}-{f}'_{m_k} \| dx
 & = \sum_{i=1}^{k-1}\int_{x^k_i}^{x^k_{i+1}}\| \overline{({f}'_{m_k})}_{[x^k_i,x^k_{i+1}]}  - {f}'_{m_k} \| dx 
 \leq \sum_{i=1}^{k-1} |x^k_{i+1} -x^k_i| D_{ {f}'_{\frac 1 {m_k}}}(\frac C k)
 \\ & \leq C \cdot D_{ {f}_{\frac 1 {m_k}}}(\frac C k).
\end{align*}
From \eqref{eq:DBounded} we get $D_{{f}_{\frac 1 {m_k}}}  \leq  D_{f}  $.
So $D_{ {f}_{\frac 1 {m_k}}}( r)$ goes uniformly to zero as $r \rightarrow 0$ and hence $p_k'- { f}'_{\frac 1 {m_k}}$ converges to $0$ as $k$ goes to $\infty$. Since $f'_{\frac 1 {m_k}}$ converges to $f'$ in $L^1$, we deduce that $p'_{k}$ converges to $f'$ in $L^1$. 
Since the polygons are furthermore uniformly bounded in $W^{1, \infty}$, we get convergence in $W^{1,q}$ for all $q \in [1, \infty)$ 
from the interpolation estimate
$$
 \|f'\|^q_{L^q} \leq  \|f'\|_{L^\infty}^{q-1} \|f'\|^1_{L^1}.
$$

\section{Open Questions}

Though we get a quite satisfactory answer regarding $\Gamma$-convergence of our discretized energies, many elemetary problems in this context remain open. Even problems that can be answered in the context of other discretizations. 

Let us end this article by a short and by no means complete list of some interesting open problems:
 \begin{enumerate}
  \item What is the convergence rate of the inscribed polygons used in the above proof of the $\limsup$-inequality? Is it possible to drop the assumption that we are using an equipartition without losing the convergence?
  \item Do minimizers of the discrete energy within a given knot class exist? Due to the M\"obius invariance of the discretization the answer to this question is far from being obvious.
  \item Can one prove additional properties for minimizers, like a bi-Lipschitz estimate (of course depending on the fineness of the discretization) or some discrete versions of "higher regularity"?
  \item Is there a way to discretize the M\"obius energy in such a way that we preserve both the self-repulsiveness and the M\"obius invariance of the M\"obius energy?
 \end{enumerate}

\appendix

\section{A Formula for the Discretized Energy}

Let us derive a formula for our discretized energy for future reference. For this purpose, we
want to derive the expression of the circle passing through three points $ \mathrm{A}_1 $,
$ \mathrm{A}_2 $,
$ \mathrm{A}_3 $ in $ \mathbb{R}^n $.
Let $ \vecv_1 $,
$ \vecv_2 $,
$ \vecv_3 $ be their position vectors,
and let $ \vecc $,
and $ r $ be the position vector of center and the radius of the circle.
There exist $ t_1 $,
$ t_2 $,
$ t_3 \in \mathbb{R} $ such that
\[
	\vecc
	=
	t_1 \vecv_1 + t_2 \vecv_2 + t_3 \vecv_3
	,
	\quad
	t_1 + t_2 + t_3 =1 .
\]
We denote $ \angle \mathrm{A}_i = \varphi_i $.
Consider the triangle which vertices are $ \vecv_i $,
$ \vecv_j $,
$ \vecc $,
and its arcs $ S_k $,
where $ k \in \{ 1 , 2 , 3 \} $ such that $ k \ne i $ and $ k \ne j $.
We have
\[
	t_1 : t_2 : t_3 = S_1 : S_2 : S_3 .
\]
Since
\[
	S_i = \frac 12 r^2 \sin 2 \varphi_i ,
\]
we get
\[
	\vecc
	=
	\frac {
	( \sin 2 \varphi_1 ) \vecv_1
	+
	( \sin 2 \varphi_2 ) \vecv_2
	+
	( \sin 2 \varphi_3 ) \vecv_3
	}
	{ \sin 2 \varphi_1 + \sin 2 \varphi_2 + \sin 2 \varphi_3 } .
\]
By the law of sine it holds that
\[
	r
	= \frac { \| \vecv_1 - \vecv_2 \|_{ \mathbb{R}^n } } { 2 \sin \varphi_3 }
	= \frac { \| \vecv_2 - \vecv_3 \|_{ \mathbb{R}^n } } { 2 \sin \varphi_1 }
	= \frac { \| \vecv_3 - \vecv_1 \|_{ \mathbb{R}^n } } { 2 \sin \varphi_2 } .
\]
Therefore we have
\[
	r =
	\frac {
	\| \vecv_i - \vecv_j \|_{ \mathbb{R}^n }
	\| \vecv_j - \vecv_k \|_{ \mathbb{R}^n }
	\| \vecv_k - \vecv_i \|_{ \mathbb{R}^n }
	}
	{ 2 \| ( \vecv_i - \vecv_j ) \bigwedge ( \vecv_j - \vecv_k ) \|_{ \mathbb{R}^n } }
	,
\]
where $ i \ne j $,
$ j \ne k $,
$ k \ne i $.
Put
\[
	\vecu_i =
	\frac { \vecv_i - \vecc }
	{ \| \vecv_i - \vecc \|_{ \mathbb{R}^n } }
	=
	\frac { \vecv_i - \vecc } r
	,
\]
\[
	\vece_1 = \vecu_1 ,
	\quad
	\vece_2
	=
	\frac { \vecu_2 - ( \vecu_2 \cdot \vecu_1 ) \vecu_1 }
	{ \| \vecu_2 - ( \vecu_2 \cdot \vecu_1 ) \vecu_1 \|_{ \mathbb{R}^n } }
	,
\]
then the circle is given by
\[
	\vecx ( \theta )
	=
	\vecc
	+ r \left\{
	\left( \cos \frac \theta r \right) \vece_1
	+
	\left( \sin \frac \theta r \right) \vece_2
	\right\}
	.
\]
Here $ \theta \in \mathbb{R} / 2 \pi r \mathbb{Z} $ is an arc-length parameter of the circle.
Using $ \vecu_1 \cdot \vecu_2 = \cos 2 \varphi_3 $,
we have
\[
	r \{ \vecu_2 - ( \vecu_2 \cdot \vecu_1 ) \vecu_1 \}
	=
	\vecv_2 - \vecc - ( \cos 2 \varphi_3 ) ( \vecv_1 - \vecc )
	,
\]
\[
	\| \vecu_2 - ( \vecu_2 \cdot \vecu_1 ) \vecu_1 \|_{ \mathbb{R}^n }
	=
	\sqrt{ 1 - \cos^2 2 \varphi_3 }
	=
	\sin 2 \varphi_3
	.
\]
Therefore
\begin{align*}
	\vecx ( \theta )
	= & \
	\vecc
	+
	\left( \cos \frac \theta r \right) ( \vecv_1 - \vecc )
	+
	\left( \sin \frac \theta r \right)
	\frac
	{ \vecv_2 - \vecc - ( \cos 2 \varphi_3 ) ( \vecv_1 - \vecc ) }
	{ \sin 2 \varphi_3 }
	\\
	= & \
	\vecc
	+
	\frac 1 { \sin 2 \varphi_3 }
	\left[
	\left\{ \sin \left( 2 \varphi_3 - \frac \theta r \right) \right\} ( \vecv_1 - \vecc )
	+
	\left( \sin \frac \theta r \right) ( \vecv_2 - \vecc )
	\right]
	.
\end{align*}
It is easy to see
\[
	\vecx ( 0 ) = \vecv_1 ,
	\quad
	\vecx ( 2 r \varphi_3 ) = \vecv_2 ,
	\quad
	\vecx ( 2 r \varphi_3 + 2r \varphi_1 ) = \vecv_3
	.
\]
It follows from
\[
	\dot { \vecx } ( \theta )
	=
	\frac 1 { r \sin 2 \varphi_3 }
	\left[
	- \left\{ \cos \left( 2 \varphi_3 - \frac \theta r \right) \right\}
	( \vecv_1 - \vecc )
	+
	\left( \cos \frac \theta r \right) ( \vecv_2 - \vecc )
	\right]
\]
that
\[
	\dot { \vecx } (0)
	=
	\frac 1 { r \sin 2 \varphi_3 }
	\left\{
	- ( \cos 2 \varphi_3 )
	( \vecv_1 - \vecc )
	+
	\vecv_2 - \vecc
	\right\}
	.
\]
Using
\begin{align*}
	\vecv_1 - \vecc
	= & \
	t_2 ( \vecv_1 - \vecv_2 )
	+
	t_3 ( \vecv_1 - \vecv_3 )
	=
	\frac
	{
	( \sin 2 \varphi_2 ) ( \vecv_1 - \vecv_2 )
	+ 
	( \sin 2 \varphi_3 ) ( \vecv_1 - \vecv_3 )
	}
	{ \sin 2 \varphi_1 + \sin 2 \varphi_2 + \sin 2 \varphi_3 }
	,
	\\
	\vecv_2 - \vecc
	= & \
	t_1 ( \vecv_2 - \vecv_1 )
	+
	t_3 ( \vecv_2 - \vecv_3 )
	=
	\frac
	{
	( \sin 2 \varphi_1 ) ( \vecv_2 - \vecv_1 )
	+ 
	( \sin 2 \varphi_3 ) ( \vecv_2 - \vecv_3 )
	}
	{ \sin 2 \varphi_1 + \sin 2 \varphi_2 + \sin 2 \varphi_3 }
	,
\end{align*}
we obtain
\[
	\dot { \vecx } (0)
	=
	-
	\frac
	{
	( \sin^2 \varphi_2 ) ( \vecv_1 - \vecv_2 )
	+
	( \sin^2 \varphi_3 ) ( \vecv_3 - \vecv_1 )
	}
	{ r ( \sin \varphi_1 \cos \varphi_1 + \sin \varphi_2 \cos \varphi_2 + \sin \varphi_3 \cos \varphi_3 ) }
	.
\]
Using the notation
\[
	\Delta_i^j \vecv = \vecv_j - \vecv_i ,
\]
we have
\[
	\cos \varphi_i =
	\frac { \Delta_i^j \vecv } { \| \Delta_i^j \vecv \|_{ \mathbb{R}^n } }
	\cdot
	\frac { \Delta_i^k \vecv } { \| \Delta_i^k \vecv \|_{ \mathbb{R}^n } }
	,
	\quad
	\sin \varphi_i =
	\left\|
	\frac { \Delta_i^j \vecv } { \| \Delta_i^j \vecv \|_{ \mathbb{R}^n } }
	\bigwedge
	\frac { \Delta_i^k \vecv } { \| \Delta_i^k \vecv \|_{ \mathbb{R}^n } }
	\right\|_{ \mathbb{R}^n }
	,
\]
where $ i \ne j $,
$ j \ne k $,
$ k \ne i $.
Furthermore we have
\[
	r =
	\frac { \| \Delta_j^k \vecv \|_{ \mathbb{R}^n } }
	{
	2
	\left\|
	\frac { \Delta_i^j \vecv } { \| \Delta_i^j \vecv \|_{ \mathbb{R}^n } }
	\bigwedge
	\frac { \Delta_i^k \vecv } { \| \Delta_i^k \vecv \|_{ \mathbb{R}^n } }
	\right\|_{ \mathbb{R}^n }
	}
	,
\]
\[
	r \sin \varphi_i \cos \varphi_i
	=
	\frac 12
	\| \Delta_j^k \vecv \|_{ \mathbb{R}^n }
	\left(
	\frac { \Delta_i^j \vecv } { \| \Delta_i^j \vecv \|_{ \mathbb{R}^n } }
	\cdot
	\frac { \Delta_i^k \vecv } { \| \Delta_i^k \vecv \|_{ \mathbb{R}^n } }
	\right)
	.
\]
Consequently we have
\begin{align*}
	\dot { \vecx } (0)
	= & \
	-
	2
	\left\{
	\| \Delta_2^3 \vecv \|_{ \mathbb{R}^n }
	\left(
	\frac { \Delta_1^2 \vecv } { \| \Delta_1^2 \vecv \|_{ \mathbb{R}^n } }
	\cdot
	\frac { \Delta_1^3 \vecv } { \| \Delta_1^3 \vecv \|_{ \mathbb{R}^n } }
	\right)
	\right.
	\\
	& \quad \qquad
	\left.
	+ \,
	\| \Delta_3^1 \vecv \|_{ \mathbb{R}^n }
	\left(
	\frac { \Delta_2^3 \vecv } { \| \Delta_2^3 \vecv \|_{ \mathbb{R}^n } }
	\cdot
	\frac { \Delta_2^1 \vecv } { \| \Delta_2^1 \vecv \|_{ \mathbb{R}^n } }
	\right)
	\right.
	\\
	& \quad \qquad
	\left.
	+ \,
	\| \Delta_1^2 \vecv \|_{ \mathbb{R}^n }
	\left(
	\frac { \Delta_3^1 \vecv } { \| \Delta_3^1 \vecv \|_{ \mathbb{R}^n } }
	\cdot
	\frac { \Delta_3^2 \vecv } { \| \Delta_3^2 \vecv \|_{ \mathbb{R}^n } }
	\right)
	\right\}^{-1}
	\\
	& \quad \qquad
	\times
	\left\{
	\left\|
	\frac { \Delta_2^3 \vecv } { \| \Delta_2^3 \vecv \|_{ \mathbb{R}^n } }
	\bigwedge
	\frac { \Delta_2^1 \vecv } { \| \Delta_2^1 \vecv \|_{ \mathbb{R}^n } }
	\right\|_{ \mathbb{R}^n }^2
	\Delta_2^1 \vecv
	+
	\left\|
	\frac { \Delta_3^1 \vecv } { \| \Delta_3^1 \vecv \|_{ \mathbb{R}^n } }
	\bigwedge
	\frac { \Delta_3^2 \vecv } { \| \Delta_3^2 \vecv \|_{ \mathbb{R}^n } }
	\right\|_{ \mathbb{R}^n }^2
	\Delta_1^3 \vecv
	\right\}
	.
\end{align*}
Let $ \vecw_i $ and $ \overline \vecw_i $ be
\[
	\vecw_i = \Delta_i^{ i+1 } \vecv = \vecv_{ i+1 } - \vecv_i
	,
	\quad
	\overline { \vecw }_i = \frac { \vecw_i } { \| \vecw_i \|_{ \mathbb{R}^n } }
	.
\]
Then
\begin{align*}
	\dot { \vecx } (0)
	= & \
	-
	\frac
	{
	2 \left(
	\left\| \overline{ \vecw }_2 \wedge \overline{ \vecw }_1 \right\|_{ \mathbb{R}^n }^2
	\vecw_1
	+
	\left\| \overline{ \vecw }_2 \wedge \overline{ \vecw }_3 \right\|_{ \mathbb{R}^n }^2
	\vecw_3
	\right)
	}
	{
	\| \vecw_1 \|_{ \mathbb{R}^n }
	( \overline{ \vecw }_2 \cdot \overline{ \vecw }_3 )
	+
	\| \vecw_2 \|_{ \mathbb{R}^n }
	( \overline{ \vecw }_3 \cdot \overline{ \vecw }_1 )
	+
	\| \vecw_3 \|_{ \mathbb{R}^n }
	( \overline{ \vecw }_1 \cdot \overline{ \vecw }_2 )
	}
	\\
	= & \
	-
	\frac 2
	{
	\| \vecw_1 \|_{ \mathbb{R}^n }
	\| \vecw_2 \|_{ \mathbb{R}^n }
	\| \vecw_3 \|_{ \mathbb{R}^n }
	}
	\\
	& \quad
	\times
	\frac
	{
	\| \vecw_3 \|_{ \mathbb{R}^n }^2
	\left\| \vecw_2 \wedge \vecw_1 \right\|_{ \mathbb{R}^n }^2
	\vecw_1
	+
	\| \vecw_1 \|_{ \mathbb{R}^n }^2
	\left\| \vecw_2 \wedge \vecw_3 \right\|_{ \mathbb{R}^n }^2
	\vecw_3
	}
	{
	\| \vecw_1 \|_{ \mathbb{R}^n }^2
	( \vecw_2 \cdot \vecw_3 )
	+
	\| \vecw_2 \|_{ \mathbb{R}^n }^2
	( \vecw_3 \cdot \vecw_1 )
	+
	\| \vecw_3 \|_{ \mathbb{R}^n }^2
	( \vecw_1 \cdot \vecw_2 )
	}
\end{align*}
Using $ \vecw_1 + \vecw_2 + \vecw_3 = \veco $,
we have
\[
	\vecw_2 = - \vecw_1 - \vecw_3
	,
\]
\[
	\vecw_2 \wedge \vecw_3
	=
	-
	\vecw_2 \wedge \vecw_1
	,
\]
and
\begin{align*}
	&
	\| \vecw_1 \|_{ \mathbb{R}^n }^2
	( \vecw_2 \cdot \vecw_3 )
	+
	\| \vecw_2 \|_{ \mathbb{R}^n }^2
	( \vecw_3 \cdot \vecw_1 )
	+
	\| \vecw_3 \|_{ \mathbb{R}^n }^2
	( \vecw_1 \cdot \vecw_2 )
	\\
	& \quad
	=
	\| \vecw_1 \|_{ \mathbb{R}^n }^2
	\{ \vecw_2 \cdot ( - \vecw_1 - \vecw_3 ) \}
	+
	\| \vecw_2 \|_{ \mathbb{R}^n }^2
	\{ ( - \vecw_1 - \vecw_2 ) \cdot \vecw_1 \}
	\\
	& \quad \qquad
	+ \,
	\| - \vecw_1 - \vecw_2 \|_{ \mathbb{R}^n }^2
	( \vecw_1 \cdot \vecw_2 )
	\\
	& \quad
	=
	-
	\| \vecw_1 \|_{ \mathbb{R}^n }^2
	( \vecw_2 \cdot \vecw_1 )
	-
	\| \vecw_1 \|_{ \mathbb{R}^n }^2
	\| \vecw_2 \|_{ \mathbb{R}^n }^2
	-
	\| \vecw_2 \|_{ \mathbb{R}^n }^2
	\| \vecw_1 \|_{ \mathbb{R}^n }^2
	-
	\| \vecw_2 \|_{ \mathbb{R}^n }^2
	( \vecw_2 \cdot \vecw_1 )
	\\
	& \quad \qquad
	+ \,
	\left\{
	\| \vecw_1 \|_{ \mathbb{R}^n }^2
	+ 2 ( \vecw_1 \cdot \vecw_2 )
	+ \| \vecw_2 \|_{ \mathbb{R}^n }^2
	\right\}
	( \vecw_1 \cdot \vecw_2 )
	\\
	& \quad
	=
	- 2
	\left\{
	\| \vecw_1 \|_{ \mathbb{R}^n }^2
	\| \vecw_2 \|_{ \mathbb{R}^n }^2
	-
	( \vecw_1 \cdot \vecw_2 )^2
	\right\}
	\\
	& \quad
	=
	- 2
	\| \vecw_1 \wedge \vecw_2 \|_{ \mathbb{R}^n }^2
	.
\end{align*}
Hence
\[
	\vecx (0)
	=
	\frac
	{
	\| \vecw_3 \|_{ \mathbb{R}^n }^2
	\vecw_1
	+
	\| \vecw_1 \|_{ \mathbb{R}^n }^2
	\vecw_3
	}
	{
	\| \vecw_1 \|_{ \mathbb{R}^n }
	\| \vecw_2 \|_{ \mathbb{R}^n }
	\| \vecw_3 \|_{ \mathbb{R}^n }
	}
	=
	\frac
	{
	\| \Delta_3 \vecv \|_{ \mathbb{R}^n }^2
	\Delta_1 \vecv
	+
	\| \Delta_1 \vecv \|_{ \mathbb{R}^n }^2
	\Delta_3 \vecv
	}
	{
	\| \Delta_1 \vecv \|_{ \mathbb{R}^n }
	\| \Delta_2 \vecv \|_{ \mathbb{R}^n }
	\| \Delta_3 \vecv \|_{ \mathbb{R}^n }
	}
	.
\]
Generally we obtain
\begin{lemma} \label{lem:formulatangent}
Let $ C $ be the circle passing through three end points of position vectors $ \vecv_1 $,
$ \vecv_2 $ and $ \vecv_3 \in \mathbb{R}^n $.
Then the unit tangent vector $ \vect_i $ at the end point of $ \vecv_i $ is
\[
	\vect_i
	=
	\frac
	{
	\| \Delta_{ i-1 } \vecv \|_{ \mathbb{R}^n }^2
	\Delta_i \vecv
	+
	\| \Delta_i \vecv \|_{ \mathbb{R}^n }^2
	\Delta_{ i-1 } \vecv
	}
	{
	\| \Delta_i \vecv \|_{ \mathbb{R}^n }
	\| \Delta_{ i+1 } \vecv \|_{ \mathbb{R}^n }
	\| \Delta_{ i-1 } \vecv \|_{ \mathbb{R}^n }
	}
	.
\]
\end{lemma}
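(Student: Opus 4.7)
The plan is to rely on the explicit arc-length parametrization $\vecx(\theta)$ of the circle $C$ through $\vecv_1, \vecv_2, \vecv_3$ that has just been constructed above the statement. Since $\vecx$ is parametrized by arc length and $\vecx(0) = \vecv_1$, the vector $\dot\vecx(0)$ is by definition the unit tangent to $C$ at $\vecv_1$. Thus the lemma for $i = 1$ reduces to checking that the closed-form expression just derived for $\dot\vecx(0)$ agrees, after rewriting in the $\Delta_j \vecv$ notation and with the cyclic convention $\Delta_0 \vecv := \vecv_1 - \vecv_3 = \vecw_3$, with the formula asserted by the lemma. This is a direct translation: identify $\vecw_j$ with $\Delta_j \vecv$, note that the numerator $\|\vecw_3\|^2 \vecw_1 + \|\vecw_1\|^2 \vecw_3$ becomes $\|\Delta_0 \vecv\|^2 \Delta_1 \vecv + \|\Delta_1 \vecv\|^2 \Delta_0 \vecv$, and recognize that the factor $\|\vecw_2\|$ in the denominator corresponds to $\|\Delta_{i+1}\vecv\|$ for $i = 1$.

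For the remaining indices $i = 2, 3$, the entire construction is equivariant under cyclic permutations of the triple $(\vecv_1, \vecv_2, \vecv_3)$: the circumcircle $C$, its center $\vecc$, its radius $r$, and the arc-length parametrization re-based at $\vecv_i$ all transform by relabeling. I would therefore obtain the formula at $\vecv_i$ by replaying the derivation verbatim with indices shifted by $i-1$ modulo $3$; equivalently, one observes that the right-hand side of the claimed formula is manifestly invariant under the cyclic shift $i \mapsto i+1$ in $\{1,2,3\}$, so the case $i = 1$ already forces the formulas for $i = 2, 3$.

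The main substantive step---already carried out above---is the algebraic simplification of the denominator via the triangle closure $\vecw_1 + \vecw_2 + \vecw_3 = \veco$, which collapses $\|\vecw_1\|^2 (\vecw_2 \cdot \vecw_3) + \|\vecw_2\|^2 (\vecw_3 \cdot \vecw_1) + \|\vecw_3\|^2 (\vecw_1 \cdot \vecw_2)$ to $-2 \|\vecw_1 \wedge \vecw_2\|^2$. The remaining hazard is merely bookkeeping: one must confirm that $\dot\vecx(0)$ points in the direction of motion from $\vecv_1$ toward $\vecv_2$ along $C$, so that the signs in the formula are consistent; this is pinned down by the identity $\vecx(2 r \varphi_3) = \vecv_2$ established in the derivation. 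As a sanity check, I would verify $\|\vect_i\| = 1$ directly from the closed form, which is automatic from the arc-length property but can also be seen by a short planar computation in the two-dimensional affine plane spanned by $\Delta_{i-1}\vecv$ and $\Delta_i \vecv$ (which contains $C$).
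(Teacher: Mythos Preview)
Your proposal is correct and follows essentially the same approach as the paper: the long computation preceding the lemma is precisely the derivation of $\dot\vecx(0)$ as the unit tangent at $\vecv_1$, and the paper's phrase ``Generally we obtain'' before the lemma is exactly your cyclic-symmetry step extending the $i=1$ case to $i=2,3$. Your identifications $\vecw_j = \Delta_j\vecv$, $\Delta_0\vecv = \vecw_3$, and the role of the closure relation $\vecw_1+\vecw_2+\vecw_3=\veco$ in collapsing the denominator all match the paper's derivation exactly.
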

\par
From lemma~\ref{lem:formulatangent},
the unit tangent vector $ \vect ( i , i+1, j ) $ at $ \vecf ( \theta_j ) $ of the circle passing though $ \vecf ( \theta_i ) $,
$ \vecf ( \theta_{ i+1 } ) $,
$ \vecf ( \theta_j ) $ is
\begin{equation} \label{eq:tii+1j}
\begin{aligned}
	\vect ( i , i+1 , j )
	= & \
	\frac
	{
	\| \Delta_{ i+1 }^j \vecf \|_{ \mathbb{R}^n }^2
	\Delta_j^i \vecf
	+
	\| \Delta_j^i \vecf \|_{ \mathbb{R}^n }^2
	\Delta_{ i+1 }^j \vecf
	}
	{
	\| \Delta_j^i \vecf \|_{ \mathbb{R}^n }
	\| \Delta_i \vecf \|_{ \mathbb{R}^n }
	\| \Delta_{ i+1 }^j \vecf \|_{ \mathbb{R}^n }
	}
	\\
	= & \
	\frac
	{
	-
	\| \Delta_{ i+1 }^j \vecf \|_{ \mathbb{R}^n }^2
	\Delta_i^j \vecf
	+
	\| \Delta_i^j \vecf \|_{ \mathbb{R}^n }^2
	\Delta_{ i+1 }^j \vecf
	}
	{
	\| \Delta_i^j \vecf \|_{ \mathbb{R}^n }
	\| \Delta_i \vecf \|_{ \mathbb{R}^n }
	\| \Delta_{ i+1 }^j \vecf \|_{ \mathbb{R}^n }
	}
	\\
	= & \
	\frac
	{
	-
	\left(
	\| \Delta_{ i+1 }^j \vecf \|_{ \mathbb{R}^n }^2
	-
	\| \Delta_i^j \vecf \|_{ \mathbb{R}^n }^2
	\right)
	\Delta_i^j \vecf
	-
	\| \Delta_i^j \vecf \|_{ \mathbb{R}^n }^2
	\Delta_i \vecf
	}
	{
	\| \Delta_i^j \vecf \|_{ \mathbb{R}^n }
	\| \Delta_i \vecf \|_{ \mathbb{R}^n }
	\| \Delta_{ i+1 }^j \vecf \|_{ \mathbb{R}^n }
	}
	\\
	= & \
	\frac
	{
	\left\{ \Delta_i \vecf \cdot \left( 2 \Delta_i^j \vecf - \Delta_i \vecf \right) \right\}
	\Delta_i^j \vecf
	-
	\| \Delta_i^j \vecf \|_{ \mathbb{R}^n }^2
	\Delta_i \vecf
	}
	{
	\| \Delta_i^j \vecf \|_{ \mathbb{R}^n }
	\| \Delta_i \vecf \|_{ \mathbb{R}^n }
	\| \Delta_{ i+1 }^j \vecf \|_{ \mathbb{R}^n }
	}
	\\
	= & \
	\frac 1
	{
	\| \Delta_i^j \vecf \|_{ \mathbb{R}^n }
	\| \Delta_{ i+1 }^j \vecf \|_{ \mathbb{R}^n }
	}
	\left[
	\left\{ \frac { \Delta_i \vecf } { \| \Delta_i \vecf \|_{ \mathbb{R}^n } }
	\cdot \left( 2 \Delta_i^j \vecf - \Delta_i \vecf \right) \right\}
	\Delta_i^j \vecf
	-
	\| \Delta_i^j \vecf \|_{ \mathbb{R}^n }^2
	\frac { \Delta_i \vecf } { \| \Delta_i \vecf \|_{ \mathbb{R}^n } }
	\right]
	\\
	= & \
	\frac 1
	{
	\| \Delta_i^j \vecf \|_{ \mathbb{R}^n }
	\| \Delta_{ i+1 }^j \vecf \|_{ \mathbb{R}^n }
	}
	\\
	& \times
	\left\{
	\left(
	2 \Delta_i^j \vecf \cdot \frac { \Delta_i \vecf } { \| \Delta_i \vecf \|_{ \mathbb{R}^n } }
	\right)
	\Delta_i^j \vecf
	-
	\| \Delta_i \vecf \|_{ \mathbb{R}^n }
	\Delta_i^j \vecf
	-
	\| \Delta_i^j \vecf \|_{ \mathbb{R}^n }^2
	\frac { \Delta_i \vecf } { \| \Delta_i \vecf \|_{ \mathbb{R}^n } }
	\right\}
	.
\end{aligned}
\end{equation}
The unit tangent vector $ \vect ( j+1, i , j ) $ at $ \vecf ( \theta_j ) $ of the circle passing though $ \vecf ( \theta_j ) $,
$ \vecf ( \theta_{ j+1 } ) $,
$ \vecf ( \theta_i ) $ is
\begin{equation} \label{eq:j+1ij}
\begin{aligned}
	\vect ( j+1 , i , j )
	= & \
	\frac
	{
	\| \Delta_i^j \vecf \|_{ \mathbb{R}^n }^2
	\Delta_j \vecf
	+
	\| \Delta_j \vecf \|_{ \mathbb{R}^n }^2
	\Delta_i^j \vecf
	}
	{
	\| \Delta_j \vecf \|_{ \mathbb{R}^n }
	\| \Delta_{ j+1 }^i \vecf \|_{ \mathbb{R}^n }
	\| \Delta_i^j \vecf \|_{ \mathbb{R}^n }
	}
	\\
	= & \
	\frac
	{
	\| \Delta_i^j \vecf \|_{ \mathbb{R}^n }^2
	\Delta_j \vecf
	+
	\| \Delta_j \vecf \|_{ \mathbb{R}^n }^2
	\Delta_i^j \vecf
	}
	{
	\| \Delta_j \vecf \|_{ \mathbb{R}^n }
	\| \Delta_i^{ j+1 } \vecf \|_{ \mathbb{R}^n }
	\| \Delta_i^j \vecf \|_{ \mathbb{R}^n }
	}
	\\
	= & \
	\frac 1
	{
	\| \Delta_i^j \vecf \|_{ \mathbb{R}^n }
	\| \Delta_i^{ j+1 } \vecf \|_{ \mathbb{R}^n }
	}
	\left(
	\| \Delta_i^j \vecf \|_{ \mathbb{R}^n }^2
	\frac { \Delta_j \vecf } { \| \Delta_j \vecf \|_{ \mathbb{R}^n } }
	+
	\| \Delta_j \vecf \|_{ \mathbb{R}^n }
	\Delta_i^j \vecf
	\right)
	.
\end{aligned}
\end{equation}
Consequently
\begin{align*}
	\cos \alpha_{ij}
	= & \
	\vect ( i , i+1 , j ) \cdot \vect ( j+1 , i , j )
	\\
	= & \
	\frac 1
	{
	\| \Delta_i^j \vecf \|_{ \mathbb{R}^n }
	\| \Delta_{ i+1 }^j \vecf \|_{ \mathbb{R}^n }
	}
	\\
	& \quad \qquad
	\times
	\left\{
	\left(
	2 \Delta_i^j \vecf \cdot \frac { \Delta_i \vecf } { \| \Delta_i \vecf \|_{ \mathbb{R}^n } }
	\right)
	\Delta_i^j \vecf
	-
	\| \Delta_i \vecf \|_{ \mathbb{R}^n }
	\Delta_i^j \vecf
	-
	\| \Delta_i^j \vecf \|_{ \mathbb{R}^n }^2
	\frac { \Delta_i \vecf } { \| \Delta_i \vecf \|_{ \mathbb{R}^n } }
	\right\}
	\\
	& \quad
	\cdot
	\frac 1
	{
	\| \Delta_i^j \vecf \|_{ \mathbb{R}^n }
	\| \Delta_i^{ j+1 } \vecf \|_{ \mathbb{R}^n }
	}
	\left(
	\| \Delta_i^j \vecf \|_{ \mathbb{R}^n }^2
	\frac { \Delta_j \vecf } { \| \Delta_j \vecf \|_{ \mathbb{R}^n } }
	+
	\| \Delta_j \vecf \|_{ \mathbb{R}^n }
	\Delta_i^j \vecf
	\right)
	\\
	= & \
	\frac 1
	{
	\| \Delta_{ i+1 }^j \vecf \|_{ \mathbb{R}^n }
	\| \Delta_i^{ j+1 } \vecf \|_{ \mathbb{R}^n }
	}
	\left\{
	2
	\left(
	\Delta_i^j \vecf \cdot \frac { \Delta_i \vecf } { \| \Delta_i \vecf \|_{ \mathbb{R}^n } }
	\right)
	\left(
	\Delta_i^j \vecf \cdot \frac { \Delta_j \vecf } { \| \Delta_j \vecf \|_{ \mathbb{R}^n } }
	\right)
	\right.
	\\
	& \quad \quad
	\left.
	- \,
	\| \Delta_i \vecf \|_{ \mathbb{R}^n }
	\left(
	\Delta_i^j \vecf \cdot \frac { \Delta_j \vecf } { \| \Delta_j \vecf \|_{ \mathbb{R}^n } }
	\right)
	-
	\| \Delta_i^j \vecf \|_{ \mathbb{R}^n }^2
	\frac { \Delta_i \vecf } { \| \Delta_i \vecf \|_{ \mathbb{R}^n } }
	\cdot
	\frac { \Delta_j \vecf } { \| \Delta_j \vecf \|_{ \mathbb{R}^n } }
	\right.
	\\
	& \quad \quad
	\left.
	+ \,
	2
	\| \Delta_j \vecf \|_{ \mathbb{R}^n }
	\left(
	\Delta_i^j \vecf \cdot \frac { \Delta_i \vecf } { \| \Delta_i \vecf \|_{ \mathbb{R}^n } }
	\right)
	-
	\| \Delta_i \vecf \|_{ \mathbb{R}^n }
	\| \Delta_j \vecf \|_{ \mathbb{R}^n }
	\right.
	\\
	& \quad \quad
	\left.
	- \,
	\| \Delta_j \vecf \|_{ \mathbb{R}^n }
	\left(
	\Delta_i^j \vecf \cdot \frac { \Delta_i \vecf } { \| \Delta_i \vecf \|_{ \mathbb{R}^n } }
	\right)
	\right\}
	\\
	= & \
	\frac 1
	{
	\| \Delta_{ i+1 }^j \vecf \|_{ \mathbb{R}^n }
	\| \Delta_i^{ j+1 } \vecf \|_{ \mathbb{R}^n }
	}
	\left\{
	2
	\left(
	\Delta_i^j \vecf \cdot \frac { \Delta_i \vecf } { \| \Delta_i \vecf \|_{ \mathbb{R}^n } }
	\right)
	\left(
	\Delta_i^j \vecf \cdot \frac { \Delta_j \vecf } { \| \Delta_j \vecf \|_{ \mathbb{R}^n } }
	\right)
	\right.
	\\
	& \quad \quad
	\left.
	- \,
	\| \Delta_i \vecf \|_{ \mathbb{R}^n }
	\left(
	\Delta_i^j \vecf \cdot \frac { \Delta_j \vecf } { \| \Delta_j \vecf \|_{ \mathbb{R}^n } }
	\right)
	-
	\| \Delta_i^j \vecf \|_{ \mathbb{R}^n }^2
	\frac { \Delta_i \vecf } { \| \Delta_i \vecf \|_{ \mathbb{R}^n } }
	\cdot
	\frac { \Delta_j \vecf } { \| \Delta_j \vecf \|_{ \mathbb{R}^n } }
	\right.
	\\
	& \quad \quad
	\left.
	+ \,
	\| \Delta_j \vecf \|_{ \mathbb{R}^n }
	\left(
	\Delta_i^j \vecf \cdot \frac { \Delta_i \vecf } { \| \Delta_i \vecf \|_{ \mathbb{R}^n } }
	\right)
	-
	\| \Delta_i \vecf \|_{ \mathbb{R}^n }
	\| \Delta_j \vecf \|_{ \mathbb{R}^n }
	\right\}
	\\
	= & \
	\frac 1
	{
	\| \Delta_{ i+1 }^j \vecf \|_{ \mathbb{R}^n }
	\| \Delta_i^{ j+1 } \vecf \|_{ \mathbb{R}^n }
	}
	\left\{
	\left(
	\Delta_{ i+1 }^j \vecf \cdot \frac { \Delta_i \vecf } { \| \Delta_i \vecf \|_{ \mathbb{R}^n } }
	\right)
	\left(
	\Delta_i^j \vecf \cdot \frac { \Delta_j \vecf } { \| \Delta_j \vecf \|_{ \mathbb{R}^n } }
	\right)
	\right.
	\\
	& \quad \quad
	\left.
	+ \,
	\left(
	\Delta_i^j \vecf \cdot \frac { \Delta_i \vecf } { \| \Delta_i \vecf \|_{ \mathbb{R}^n } }
	\right)
	\left(
	\Delta_i^{ j+1} \vecf \cdot \frac { \Delta_j \vecf } { \| \Delta_j \vecf \|_{ \mathbb{R}^n } }
	\right)
	\right.
	\\
	& \quad \quad
	\left.
	- \,
	\| \Delta_i^j \vecf \|_{ \mathbb{R}^n }^2
	\frac { \Delta_i \vecf } { \| \Delta_i \vecf \|_{ \mathbb{R}^n } }
	\cdot
	\frac { \Delta_j \vecf } { \| \Delta_j \vecf \|_{ \mathbb{R}^n } }
	-
	\| \Delta_i \vecf \|_{ \mathbb{R}^n }
	\| \Delta_j \vecf \|_{ \mathbb{R}^n }
	\right\}
	.
\end{align*}
\par
The unit tangent vector $ \vect ( j+1, i , i+1 ) $ at $ \vecf ( \theta_{i+1} ) $ of the circle passing though
$ \vecf ( \theta_{ j+1 } ) $,
$ \vecf ( \theta_i ) $,
$ \vecf ( \theta_{ i+1 } ) $ is
\begin{align*}
	\vect ( j+1 , i , i+1 )
	= & \
	\frac
	{
	\| \Delta_i \vecf \|_{ \mathbb{R}^n }^2
	\Delta_{ i+1 }^{ j+1 } \vecf
	+
	\| \Delta_{ i+1 }^{ j+1 } \vecf \|_{ \mathbb{R}^n }^2
	\Delta_i \vecf
	}
	{
	\| \Delta_{ i+1 }^{ j+1 }\vecf \|_{ \mathbb{R}^n }
	\| \Delta_{ j+1 }^i \vecf \|_{ \mathbb{R}^n }
	\| \Delta_i \vecf \|_{ \mathbb{R}^n }
	}
	\\
	= & \
	\frac 1
	{
	\| \Delta_{ i+1 }^{ j+1 } \vecf \|_{ \mathbb{R}^n }
	\| \Delta_i^{ j+1 } \vecf \|_{ \mathbb{R}^n }
	}
	\left(
	\| \Delta_i \vecf \|_{ \mathbb{R}^n }^2
	\Delta_{ i+1 }^{ j+1 } \vecf
	+
	\| \Delta_{ i+1 }^{ j+1 } \vecf \|_{ \mathbb{R}^n }^2
	\frac { \Delta_i \vecf } { \| \Delta_i \vecf \|_{ \mathbb{R}^n } }
	\right)
	.
\end{align*}
The unit tangent vector $ \vect ( j, j+1, i+1 ) $ at $ \vecf ( \theta_{ i+1 } ) $ of the circle passing though $ \vecf ( \theta_j ) $,
$ \vecf ( \theta_{ j+1 } ) $,
$ \vecf ( \theta_{ i+1 } ) $ is
\begin{align*}
	&
	\vect ( j , j+1 , i+1 )
	=
	\frac
	{
	\| \Delta_{ j+1 }^{ i+1 } \vecf \|_{ \mathbb{R}^n }^2
	\Delta_{ i+1 }^j \vecf
	+
	\| \Delta_{ i+1 }^j \vecf \|_{ \mathbb{R}^n }^2
	\Delta_{ j+1 }^{ i+1 } \vecf
	}
	{
	\| \Delta_{ i+1 }^j \vecf \|_{ \mathbb{R}^n }
	\| \Delta_j \vecf \|_{ \mathbb{R}^n }
	\| \Delta_{ j+1 }^{ i+1 } \vecf \|_{ \mathbb{R}^n }
	}
	\\
	& \quad
	=
	\frac
	{
	\| \Delta_{ i+1 }^{ j+1 } \vecf \|_{ \mathbb{R}^n }^2
	\Delta_{ i+1 }^j \vecf
	-
	\| \Delta_{ i+1 }^j \vecf \|_{ \mathbb{R}^n }^2
	\Delta_{ i+1 }^{ j+1 } \vecf
	}
	{
	\| \Delta_{ i+1 }^j \vecf \|_{ \mathbb{R}^n }
	\| \Delta_j \vecf \|_{ \mathbb{R}^n }
	\| \Delta_{ i+1 }^{ j+1 } \vecf \|_{ \mathbb{R}^n }
	}
	\\
	& \quad
	=
	\frac
	{
	\left(
	\| \Delta_{ i+1 }^{ j+1 } \vecf \|_{ \mathbb{R}^n }^2
	-
	\| \Delta_{ i+1 }^j \vecf \|_{ \mathbb{R}^n }^2
	\right)
	\Delta_{ i+1 }^{ j+1 } \vecf
	-
	\| \Delta_{ i+1 }^{ j+1 } \vecf \|_{ \mathbb{R}^n }^2
	\Delta_j \vecf
	}
	{
	\| \Delta_{ i+1 }^j \vecf \|_{ \mathbb{R}^n }
	\| \Delta_j \vecf \|_{ \mathbb{R}^n }
	\| \Delta_{ i+1 }^{ j+1 } \vecf \|_{ \mathbb{R}^n }
	}
	\\
	& \quad
	=
	\frac
	{
	\left\{
	\Delta_j \vecf
	\cdot
	\left(
	2 \Delta_{ i+1 }^{ j+1 } \vecf
	-
	\Delta_j \vecf
	\right)
	\right\}
	\Delta_{ i+1 }^{ j+1 } \vecf
	-
	\| \Delta_{ i+1 }^{ j+1 } \vecf \|_{ \mathbb{R}^n }^2
	\Delta_j \vecf
	}
	{
	\| \Delta_{ i+1 }^j \vecf \|_{ \mathbb{R}^n }
	\| \Delta_j \vecf \|_{ \mathbb{R}^n }
	\| \Delta_{ i+1 }^{ j+1 } \vecf \|_{ \mathbb{R}^n }
	}
	\\
	& \quad
	=
	\frac
	1
	{
	\| \Delta_{ i+1 }^j \vecf \|_{ \mathbb{R}^n }
	\| \Delta_{ i+1 }^{ j+1 } \vecf \|_{ \mathbb{R}^n }
	}
	\\
	& \quad \qquad
	\times
	\left\{
	2
	\left(
	\Delta_{ i+1 }^{ j+1 } \vecf
	\cdot
	\frac { \Delta_j \vecf } { \| \Delta_j \vecf \|_{ \mathbb{R}^n } }
	\right)
	\Delta_{ i+1 }^{ j+1 } \vecf
	-
	\| \Delta_j \vecf \|_{ \mathbb{R}^n }
	\Delta_{ i+1 }^{ j+1 } \vecf
	-
	\| \Delta_{ i+1 }^{ j+1 } \vecf \|_{ \mathbb{R}^n }^2
	\frac { \Delta_j \vecf } { \| \Delta_j \vecf \|_{ \mathbb{R}^n } }
	\right\}
	.
\end{align*}
Consequently
\begin{align*}
	\cos \tilde \alpha_{ij}
	= & \
	\vect ( j+1 , i , i+1 ) \cdot \vect ( j , j+1 , i+1 )
	\\
	= & \
	\frac
	1
	{
	\| \Delta_i^{ j+1 } \vecf \|_{ \mathbb{R}^n }
	\| \Delta_{ i+1 }^j \vecf \|_{ \mathbb{R}^n }
	}
	\left\{
	2 \| \Delta_i \vecf \|_{ \mathbb{R}^n }
	\left(
	\Delta_{ i+1 }^{ j+1 } \vecf
	\cdot
	\frac { \Delta_j \vecf } { \| \Delta_j \vecf \|_{ \mathbb{R}^n } }
	\right)
	-
	\| \Delta_i \vecf \|_{ \mathbb{R}^n }
	\| \Delta_j \vecf \|_{ \mathbb{R}^n }
	\right.
	\\
	& \quad
	\left.
	- \,
	\| \Delta_i \vecf \|_{ \mathbb{R}^n }
	\left(
	\Delta_{ i+1 }^{ j+1 } \vecf
	\cdot
	\frac { \Delta_j \vecf } { \| \Delta_j \vecf \|_{ \mathbb{R}^n } }
	\right)
	+
	2
	\left(
	\Delta_{ i+1 }^{ j+1 } \vecf
	\cdot
	\frac { \Delta_i \vecf } { \| \Delta_i \vecf \|_{ \mathbb{R}^n } }
	\right)
	\left(
	\Delta_{ i+1 }^{ j+1 } \vecf
	\cdot
	\frac { \Delta_j \vecf } { \| \Delta_j \vecf \|_{ \mathbb{R}^n } }
	\right)
	\right.
	\\
	& \quad
	\left.
	- \,
	\| \Delta_j \vecf \|_{ \mathbb{R}^n }
	\left(
	\Delta_{ i+1 }^{ j+1 } \vecf
	\cdot
	\frac { \Delta_i \vecf } { \| \Delta_i \vecf \|_{ \mathbb{R}^n } }
	\right)
	-
	\| \Delta_{ i+1 }^{ j+1 } \vecf \|_{ \mathbb{R}^n }^2
	\frac { \Delta_i \vecf } { \| \Delta_i \vecf \|_{ \mathbb{R}^n } }
	\cdot
	\frac { \Delta_j \vecf } { \| \Delta_j \vecf \|_{ \mathbb{R}^n } }
	\right\}
	\\
	= & \
	\frac
	1
	{
	\| \Delta_i^{ j+1 } \vecf \|_{ \mathbb{R}^n }
	\| \Delta_{ i+1 }^j \vecf \|_{ \mathbb{R}^n }
	}
	\left\{
	2
	\left(
	\Delta_{ i+1 }^{ j+1 } \vecf
	\cdot
	\frac { \Delta_i \vecf } { \| \Delta_i \vecf \|_{ \mathbb{R}^n } }
	\right)
	\left(
	\Delta_{ i+1 }^{ j+1 } \vecf
	\cdot
	\frac { \Delta_j \vecf } { \| \Delta_j \vecf \|_{ \mathbb{R}^n } }
	\right)
	\right.
	\\
	& \quad
	\left.
	+ \,
	\| \Delta_i \vecf \|_{ \mathbb{R}^n }
	\left(
	\Delta_{ i+1 }^{ j+1 } \vecf
	\cdot
	\frac { \Delta_j \vecf } { \| \Delta_j \vecf \|_{ \mathbb{R}^n } }
	\right)
	-
	\| \Delta_j \vecf \|_{ \mathbb{R}^n }
	\left(
	\Delta_{ i+1 }^{ j+1 } \vecf
	\cdot
	\frac { \Delta_i \vecf } { \| \Delta_i \vecf \|_{ \mathbb{R}^n } }
	\right)
	\right.
	\\
	& \quad
	\left.
	- \,
	\| \Delta_{ i+1 }^{ j+1 } \vecf \|_{ \mathbb{R}^n }^2
	\frac { \Delta_i \vecf } { \| \Delta_i \vecf \|_{ \mathbb{R}^n } }
	\cdot
	\frac { \Delta_j \vecf } { \| \Delta_j \vecf \|_{ \mathbb{R}^n } }
	-
	\| \Delta_i \vecf \|_{ \mathbb{R}^n }
	\| \Delta_j \vecf \|_{ \mathbb{R}^n }
	\right\}
	\\
	= & \
	\frac
	1
	{
	\| \Delta_i^{ j+1 } \vecf \|_{ \mathbb{R}^n }
	\| \Delta_{ i+1 }^j \vecf \|_{ \mathbb{R}^n }
	}
	\left\{
	\left(
	\Delta_i^{ j+1 } \vecf
	\cdot
	\frac { \Delta_i \vecf } { \| \Delta_i \vecf \|_{ \mathbb{R}^n } }
	\right)
	\left(
	\Delta_{ i+1 }^{ j+1 } \vecf
	\cdot
	\frac { \Delta_j \vecf } { \| \Delta_j \vecf \|_{ \mathbb{R}^n } }
	\right)
	\right.
	\\
	& \quad
	\left.
	+ \,
	\left(
	\Delta_{ i+1 }^{ j+1 } \vecf
	\cdot
	\frac { \Delta_i \vecf } { \| \Delta_i \vecf \|_{ \mathbb{R}^n } }
	\right)
	\left(
	\Delta_{ i+1 }^j \vecf
	\cdot
	\frac { \Delta_j \vecf } { \| \Delta_j \vecf \|_{ \mathbb{R}^n } }
	\right)
	\right.
	\\
	& \quad
	\left.
	- \,
	\| \Delta_{ i+1 }^{ j+1 } \vecf \|_{ \mathbb{R}^n }^2
	\frac { \Delta_i \vecf } { \| \Delta_i \vecf \|_{ \mathbb{R}^n } }
	\cdot
	\frac { \Delta_j \vecf } { \| \Delta_j \vecf \|_{ \mathbb{R}^n } }
	-
	\| \Delta_i \vecf \|_{ \mathbb{R}^n }
	\| \Delta_j \vecf \|_{ \mathbb{R}^n }
	\right\}
	.
\end{align*}


\end{document}